\theoremstyle{plain}
\newtheorem{theorem}{Theorem}
\newtheorem{lemma}{Lemma}
\theoremstyle{definition}
\theoremstyle{example}
\theoremstyle{remark}
\numberwithin{equation}{section}
\begin{document}


\title[Canonical RNA pseudoknot structures]
{Canonical RNA pseudoknot structures}
\author{Gang Ma and Christian M. Reidys$^{\,\star}$}
\address{Center for Combinatorics, LPMC-TJKLC 
           \\
         Nankai University  \\
         Tianjin 300071\\
         P.R.~China\\
         Phone: *86-22-2350-6800\\
         Fax:   *86-22-2350-9272}
\email{reidys@nankai.edu.cn}
\thanks{}
\keywords{RNA secondary structure, pseudoknot, enumeration, generating
function, singularity analysis}
\date{June, 2008}
\begin{abstract}
In this paper we study $k$-noncrossing, canonical RNA pseudoknot
structures with minimum arc-length $\ge 4$. Let ${\sf
T}_{k,\sigma}^{[4]} (n)$ denote the number of these structures. We
derive exact enumeration results by computing the generating
function ${\bf T}_{k,\sigma}^{[4]}(z)= \sum_n{\sf
T}_{k,\sigma}^{[4]}(n)z^n$ and derive the asymptotic formulas
${\sf T}_{k,3}^{[4]}(n)^{}\sim c_k\,n^{-(k-1)^2-\frac{k-1}{2}}
(\gamma_{k,3}^{[4]})^{-n}$ for $k=3,\ldots,9$. In particular we
have for $k=3$, ${\sf T}_{3,3}^{[4]}(n)^{}\sim c_3\,n^{-5}
2.0348^n$. Our results prove that the set of biophysically
relevant RNA pseudoknot structures is surprisingly small and
suggest a new structure class as target for prediction algorithms.
\end{abstract}
\maketitle
{{\small
}}


\section{Introduction}\label{S:intro}

RNA pseudoknot structures have drawn a lot of attention over the
last decade \cite{Science:05a}. From micro-RNA binding to
ribosomal frameshifts \cite{Parkin:91}, we currently discover
novel RNA functionalities at truly amazing rates. Our conceptional
understanding of RNA pseudoknot structures has not kept up with
this pace. Only recently the generating functions of
$k$-noncrossing RNA structures of arc-length $\ge 2$
\cite{Reidys:07pseu}, arc-length $\ge 4$ \cite{Han:08arc4} and
canonical $k$-noncrossing RNA structures
of arc-length $\ge 2$ \cite{Reidys:07lego} have been derived.
While these combinatorial results open new perspectives
for the design of new folding algorithms, it has to be noted that
realistic pseudoknot structures are subject to a minimum
arc-length $\ge 4$ and stack-length $\ge 3$. Therefore the above
structure classes are not ``best possible''. The lack of a transparent
target class of RNA pseudoknot structures represents a problem for
{\it ab initio} prediction algorithms.
There are four algorithms, capable of the energy based prediction of certain
pseudoknots in polynomial time: Rivas~{\it et al.} (dynamic
programming, gap-matrices, $O(n^6)$ time and $O(n^4)$ space)
\cite{Rivas:99a}, Uemura {\it et al.} ($O(n^5)$ time and $O(n^4)$
space, tree-adjoining grammars) \cite{Uemura:99a}, Akutsu
\cite{Akutsu:00a} and Lyngso \cite{Lyngso:00a}. All of them follow
the dynamic programming paradigm and none produces an easily
specifiable class of pseudoknots as output.

In this paper we
characterize a class of pseudoknot RNA structures in which bonds
have a minimum length of four and stacks contains at least three
base pairs. Our results show that this structure class is ideally
suited as {\it a priori}-output for prediction algorithms.
Tab.\ref{T:1} indicates that this class remains suitable even for
more complex pseudoknots (specified in terms of larger sets of
{\it mutually} crossing bonds). In fact, one can search RNA
$3$-noncrossing pseudoknot structure with arc-length $\ge 4$ and stack-length
$\sigma\ge 3$ for a sequence of length $100$ w.r.t.~a variety of
objective functions (in particular loop-based minimum free energy models)
on a $4$-core PC in a few minutes \cite{Reidys:08algo}.

In order to put our results into context, we turn the clock back by almost
three decades. 1978 M.~Waterman {\it et al.}
\cite{Waterman:79a,Waterman:78a,Waterman:80,Waterman:86} began deriving the
concepts for enumeration and prediction of RNA secondary structures.
The latter represent arguably {\it the} prototype of prediction-targets of
RNA structures.
RNA secondary structures are coarse grained structures which can be
represented as outer-planar graphs, diagrams, Motzkin-paths or words over
``$.$'' ``$\,(\,$'' and ``$\,)\,$''.
Their decisive feature is that they have no two crossing
bonds, see Fig.\ref{F: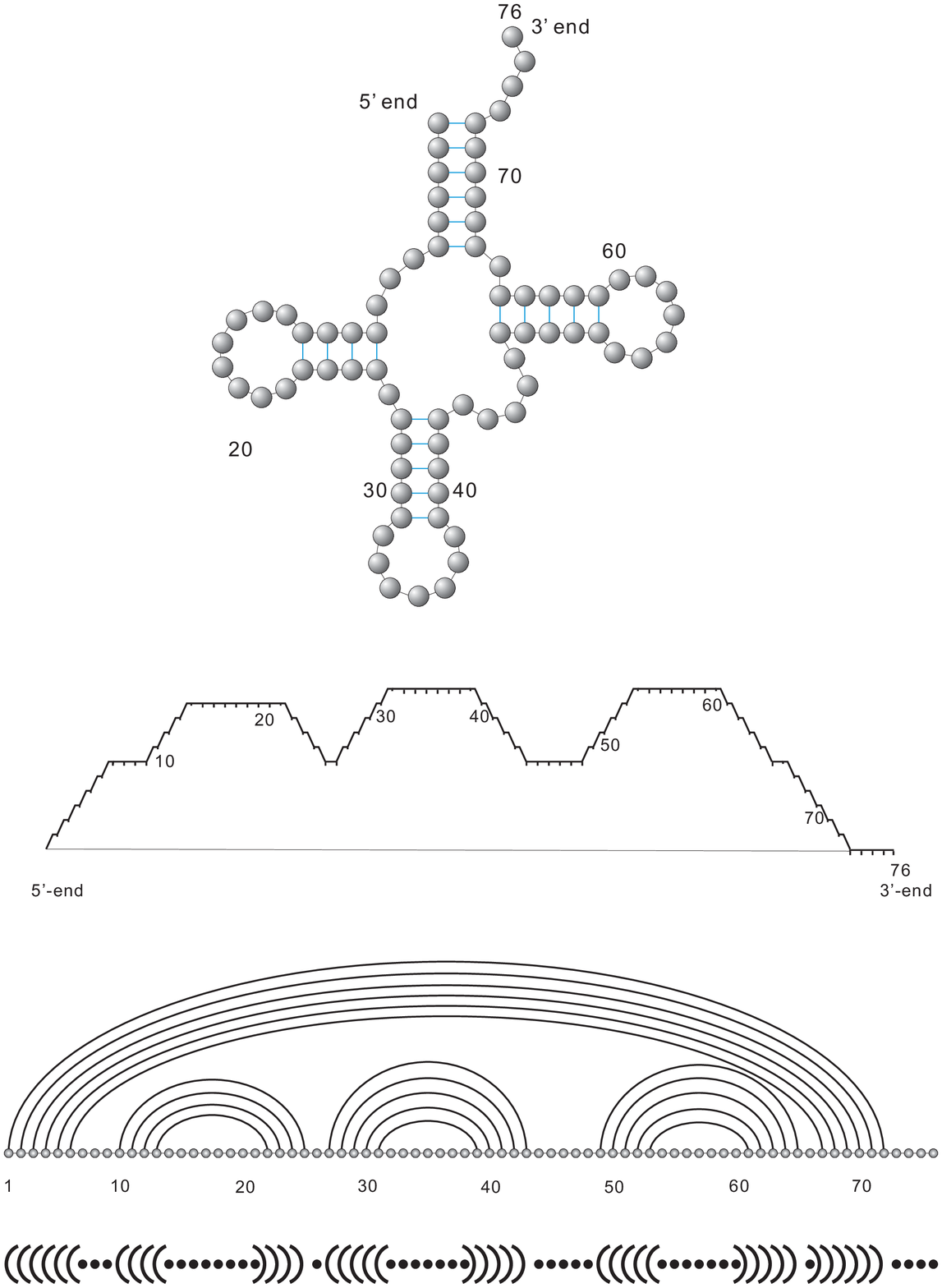}.
\begin{figure}[ht]
\centerline{%
\epsfig{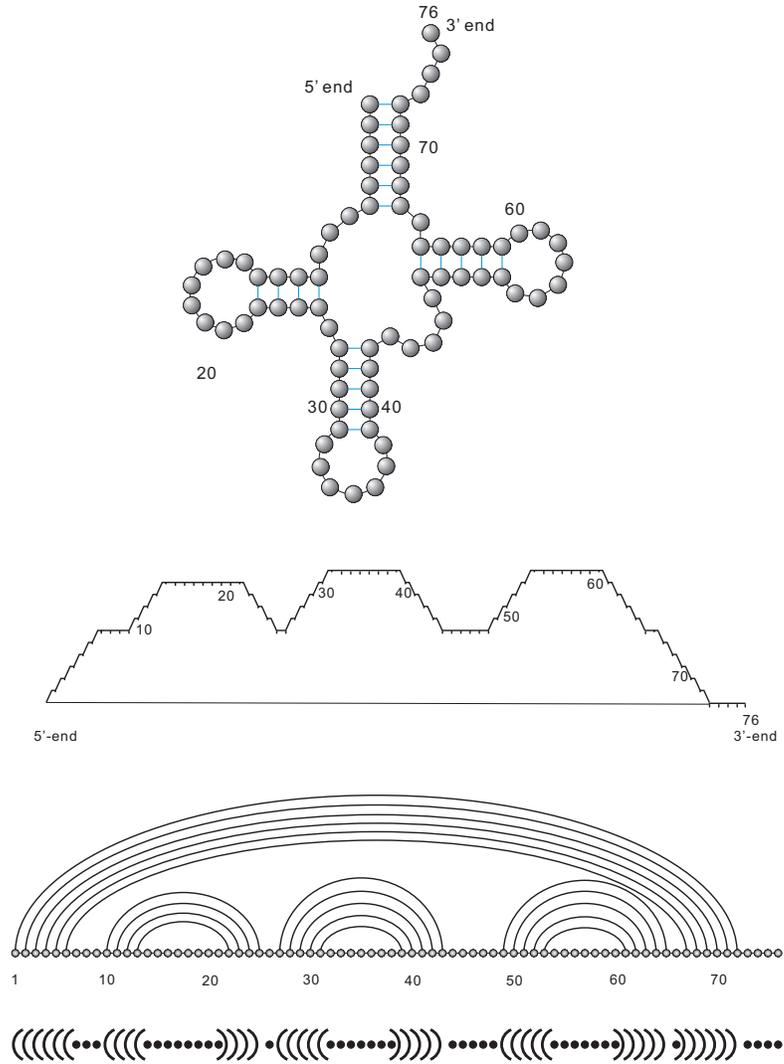}\hskip15pt
 }
\caption{\small RNA secondary structures. } \label{F:ma_reid1.eps}
\end{figure}
Let ${\sf T}_2^{[\lambda]}(n)$ denote the number of secondary
structures with arc-length $\ge \lambda$ over $[n]=\{1,\dots,n\}$.
The key to RNA secondary structures is the following recursion for
${\sf T}_2^{[\lambda]}(n)$:
\begin{equation}\label{E:sn}
{\sf T}_2^{[\lambda]}(n)={\sf T}_2^{[\lambda]}(n-1)+
\sum_{j=0}^{n-(\lambda+1)}{\sf T}_2^{[\lambda]}(n-2-j){\sf T}_2^{[\lambda]}(j),
\end{equation}
where ${\sf T}_2^{[\lambda]}(n)=1$ for $0\le n\le \lambda$. The latter follows from
considering the concatenation of Motzkin-paths with minimum peak length
$\lambda-1$.
Eq.~(\ref{E:sn})
implies for the generating function ${\bf T}_2^{[\lambda]}(z)=
\sum_{n\geq 0}{\sf T}_2^{[\lambda]}(n)z^n$ the functional equation
\begin{equation}\label{f}
z^2\, {\bf T}_2^{[\lambda]}(z)^2-
(1-z+z^2+\cdots+z^\lambda){\bf T}_2^{[\lambda]}(z)+1=0
\end{equation}
from which eventually
$$
{\bf T}_2^{[\lambda]}(z)
=\frac{-1+2z-2z^2+z^{\lambda+1}+\sqrt{1-4z+4z^2-2z^{\lambda+1}+
4z^{\lambda+2}-4z^{\lambda+3}+z^{2\lambda+2}}}
{2(z^3-z^2)}
$$
follows. Therefore, minimum arc-length restrictions do not impose
particular difficulties for RNA secondary structures. In fact
minimum stack size conditions can also be dealt with straightforwardly.
We furthermore note that eq.~(\ref{E:sn}) is a {\it constructive}
recursion, i.e.~it allows to inductively build secondary structures
over $[n]$ from those over $[i]$, for all $i<n$.

In order to analyze RNA structure with crossing
bonds, we recall the notion of $k$-noncrossing diagrams \cite{Reidys:07pseu}.
\begin{figure}[ht]
\centerline{%
\epsfig{file=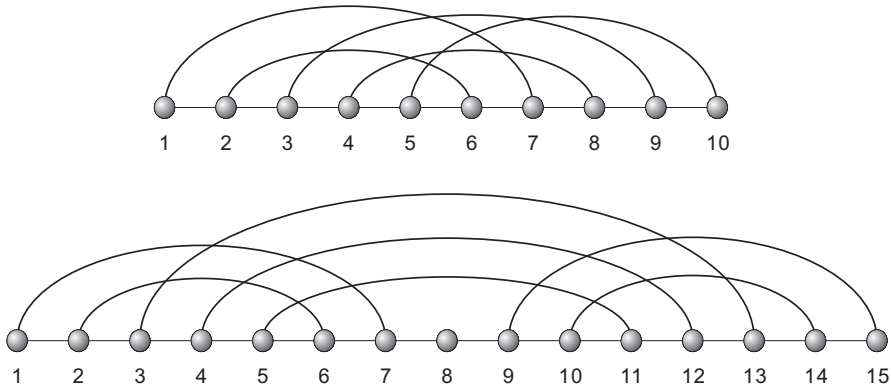,width=0.8\textwidth}\hskip15pt
 }
\caption{\small $k$-noncrossing diagrams: we display a $4$-noncrossing, arc-length
$\lambda\ge 4$ and $\sigma\ge 1$ (upper) and $3$-noncrossing, $\lambda\ge 4$
and $\sigma\ge 2$ (lower) diagram.} \label{F:ma_reid2.eps}
\end{figure}
A $k$-noncrossing diagram is a labeled graph over the vertex set
$[n]$ with vertex degrees $\le 1$, represented by drawing its vertices
$1,\dots,n$ in a horizontal line and its arcs $(i,j)$, where $i<j$,
in the upper half-plane, containing at most $k-1$ mutually crossing
arcs.
The vertices and arcs correspond to
nucleotides and Watson-Crick ({\bf A-U}, {\bf G-C}) and ({\bf U-G})
base pairs, respectively. Diagrams have the following three key parameters:
the maximum number of mutually crossing arcs, $k-1$,
the minimum arc-length, $\lambda$ and minimum stack-length, $\sigma$
($\langle k,\lambda,\sigma\rangle$-diagrams).
The length of an arc $(i,j)$ is $j-i$ and a stack of length $\sigma$
is a sequence of ``parallel'' arcs of the form
$((i,j),(i+1,j-1),\dots,(i+(\sigma-1),j-(\sigma-1)))$, see
Fig.\ref{F:ma_reid2.eps}. We call an arc of length $\lambda$ a $\lambda$-arc.
Let ${T}_{k,\sigma}^{[\lambda]}(n)$ denote the set of $k$-noncrossing diagrams
with minimum arc- and stack-length $\lambda$ and $\sigma$
and let ${\sf T}_{k,\sigma}^{[\lambda]}(n)$ denote their number.

In the following, we shall identify pseudoknot RNA structures with
$k$-noncrossing diagrams and refer to them as
$\langle k,\lambda,\sigma\rangle$-structures.
Pseudoknot RNA structures occur in functional
RNA (RNAseP) \cite{Loria:96a}, ribosomal RNA
\cite{Konings:95a} and plant viral RNAs and vitro RNA evolution experiments
have produced families of RNA structures with pseudoknot motifs
\cite{Tuerk:92}.
In Fig.\ref{F: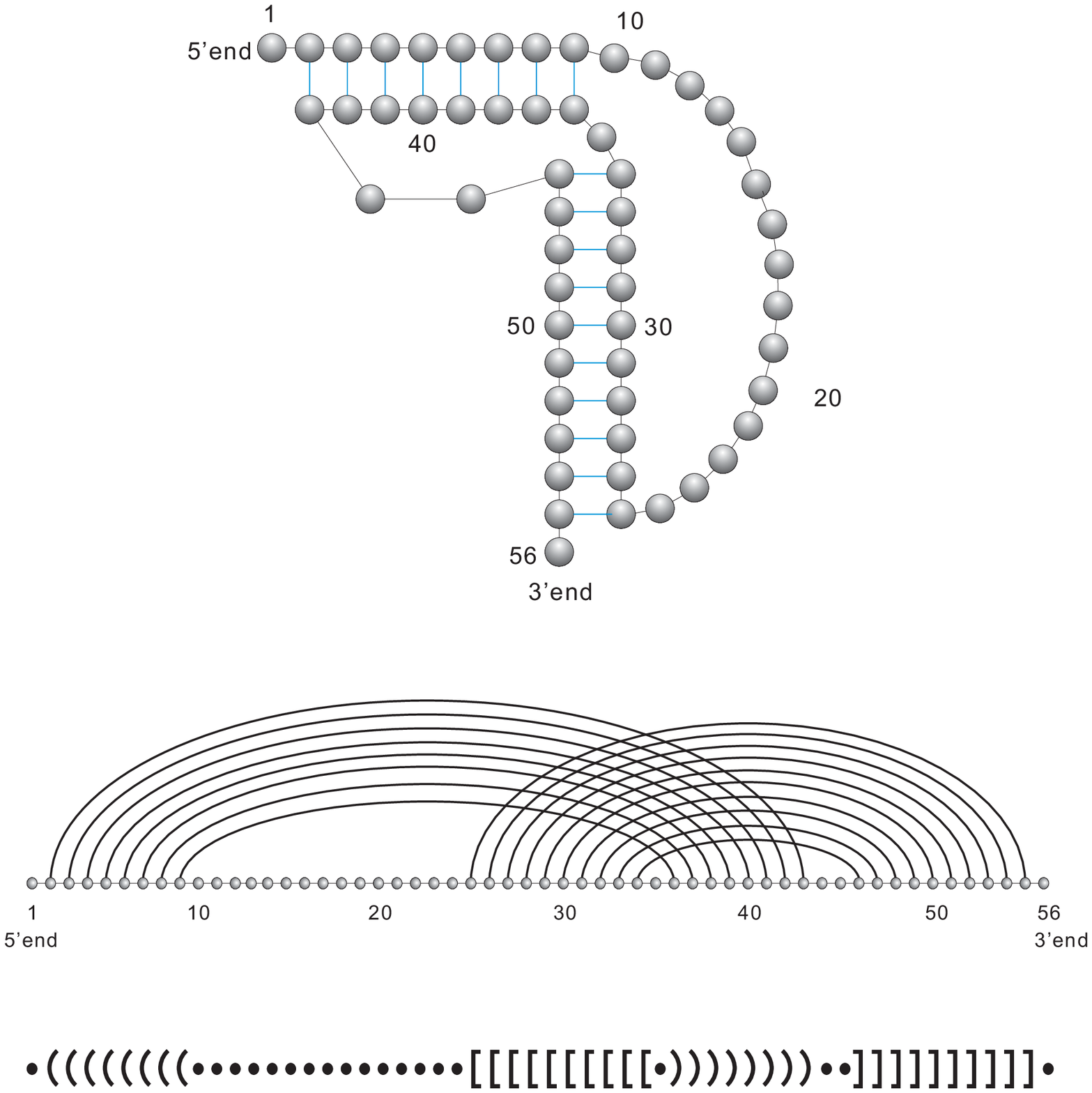} we give several representations of the
UTR-pseudoknot of the mouse hepatitis virus.
\begin{figure}[ht]
\centerline{%
\epsfig{file=ma_reid3.eps,width=0.7\textwidth}\hskip15pt
 }
\caption{\small UTR-pseudoknot structure of the mouse hepatitis virus. }
\label{F:ma_reid3.eps}
\end{figure}
Due to the crossings of arcs pseudoknots differs considerably from
secondary structures: pseudoknot RNA structures are inherently
non-inductive and no analogue of eq.~(\ref{E:sn}) exists.
One key for the generating function of $k$-noncrossing RNA structures
${\bf T}_k^{[\lambda]}(z)$ was the bijection of Chen {\it et al.}
\cite{Chen} obtained in the context of $k$-noncrossing partitions.
This bijection has been generalized to $k$-noncrossing tangled diagrams
\cite{Reidys:07vac}, a class of contact-structures tailored for expressing
RNA tertiary interactions.
Via the bijection $k$-noncrossing RNA structures can be identified with
certain walks in $\mathbb{Z}^{k-1}$ that remain in the region
$$
\{(x_1,\dots,x_{k-1})\in \mathbb{Z}^{k-1} \mid x_1\ge x_2\ge \dots x_{k-1}
\ge 0\}
$$
starting and ending at $0$, the boundaries of which
are called walls. The enumeration of these walks is obtained employing
the reflection principle. This method is due to Andr$\acute{e}$ in
$1887$ \cite{andre} and has subsequently been generalized by Gessel
and Zeilberger \cite{Zeilberger}.
In the reflection principle ``bad''-i.e.~reflected- walks cancel themselves.
In other words one enumerates all walks and due to cancellation only the ones
survive that never touch the walls. Despite its beauty this method does not
trigger any algorithmic intuition and is nonconstructive.
Moreover, $k$-noncrossing RNA structures cannot directly be enumerated via
the reflection principle: it does not preserve a minimum arc-length.
In \cite{Reidys:07pseu} it is shown how to eliminate specific classes of arcs
after reflection.
One nontrivial implication of this theory is that all generating
functions for $k$-noncrossing RNA structures are $D$-finite,
i.e.~there exists a {\it nonconstructive} recurrence relation of
finite length with polynomial coefficients for
${\sf T}_{k,\sigma}^{[\lambda]}(n)$.
Note however, that although we can prove the existence of this recurrence
it is at present not known for {\it any} $k>2$.
In Fig.\ref{F: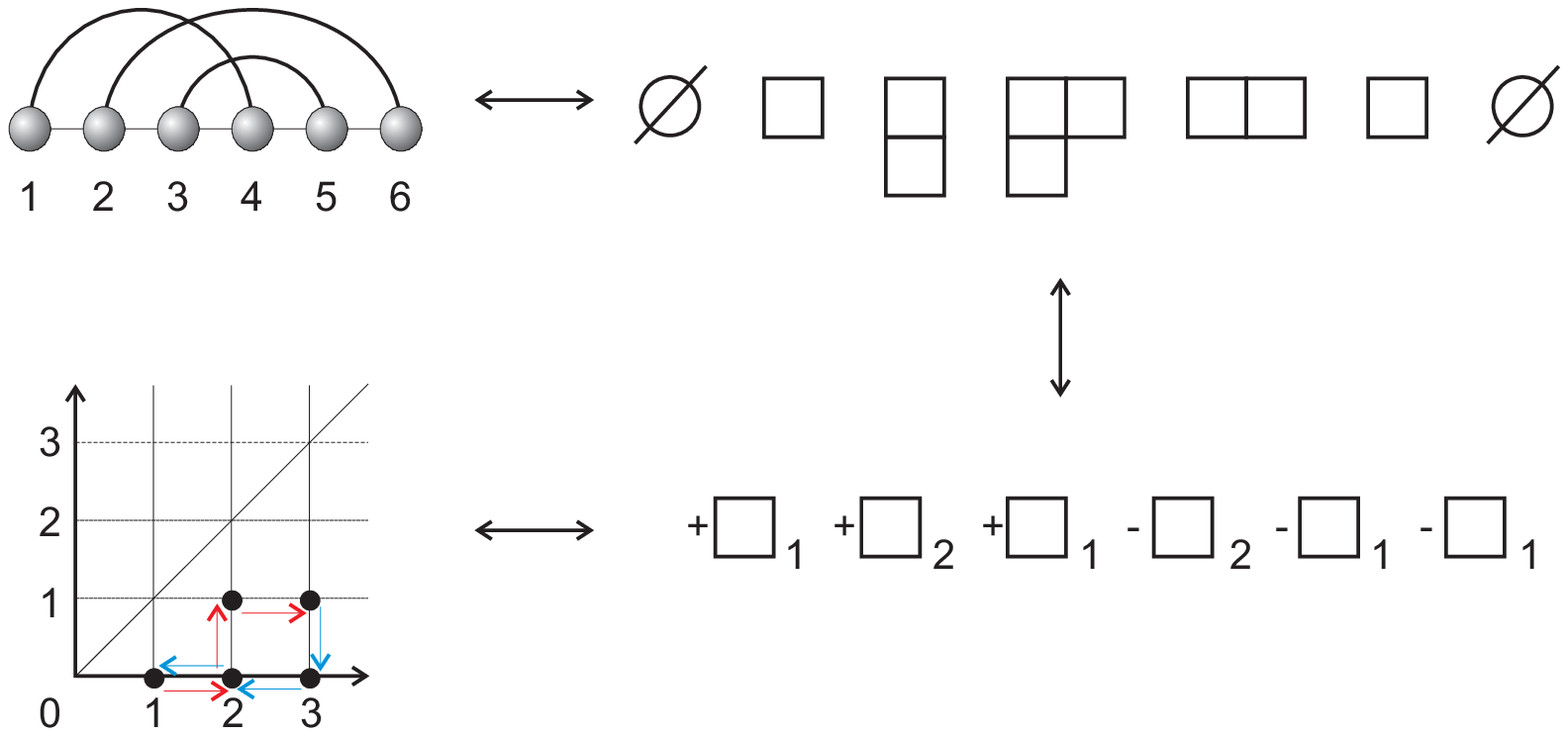} we illustrate the key steps for
the enumeration of $k$-noncrossing RNA structures \cite{Reidys:07pseu}.
\begin{figure}[ht]
\centerline{%
\epsfig{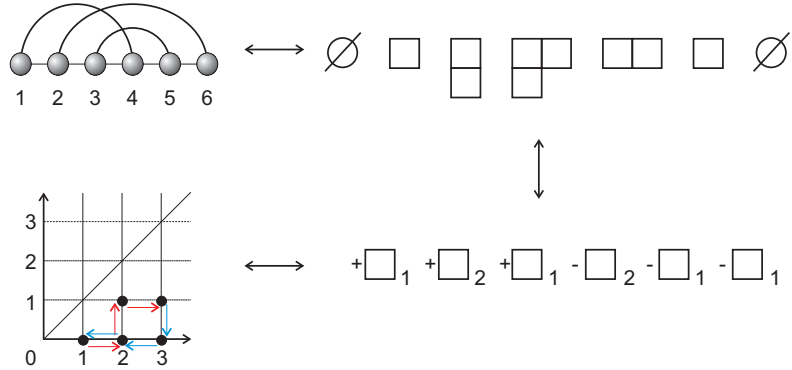}\hskip15pt
 }
\caption{\small Exact enumeration of $k$-noncrossing RNA
structures.} \label{F:ma_reid4.eps}
\end{figure}

Once ${\bf T}_{k,\sigma}^{[4]}(z)$ is known we employ singularity
analysis and study its dominant singularities, using Hankel
contours. This Ansatz has been pioneered by P.~Flajolet and A.M.~Odlyzko
\cite{Flajolet:07a}. Its basic idea is the construction of an
``singular-analogue'' of the Taylor-expansion. It can be shown
that, under certain conditions, there exists an approximation,
which is locally of the same order as the original function. The
particular, local approximation allows then to derive the
asymptotic form of the coefficients.
In our situation all conditions for singularity
analysis are met, since all our generating functions are $D$-finite
\cite{Stanley:80,Zeilberger:90} and $D$-finite functions have an
analytic continuation into any simply-connected domain containing
zero.

We will compute ${\bf T}_{k,\sigma}^{[4]}(z)$ and show that
${\bf T}_{k,\sigma}^{[4]}(z)$ has an unique dominant
singularity, whose type depends solely on the crossing number
\cite{Reidys:07asy1,Reidys:07lego}. Via singularity analysis will
produce an array of exponential growth rates indexed by $k$
and $\sigma$, summarized in Tab.~\ref{T:1}.
\begin{table}
\begin{center}
\begin{tabular}{|c|c|c|c|c|c|c|c|}
\hline $k$ & \small$3$ & \small $4$ & \small $5$ & \small $6$
&\small $7$ & \small $8$ & \small $9$  \\
\hline $\sigma=3$ & \small$2.0348$ & \small $2.2644$ & \small
$2.4432$ & \small $2.5932$
&\small $2.7243$ & \small $2.8414$ & \small $2.9480$  \\
$\sigma=4$ & \small$1.7898$ & \small $1.9370$ & \small $2.0488$ &
\small $2.1407$
&\small $2.2198 $ & \small $2.2896$ & \small $2.3523$  \\
$\sigma=5$ & \small$1.6465$ & \small $1.7532$ & \small $1.8330$ &
\small $1.8979$
&\small $1.9532$ & \small $2.0016$ & \small $2.0449$  \\
$\sigma=6$ & \small$1.5515$ & \small $1.6345$ & \small $1.6960$ &
\small $1.7457$
&\small $1.7877$ & \small $1.8243$ & \small $1.8569$  \\
 $\sigma=7$ & \small$1.4834$ & \small $1.5510$ & \small
$1.6008$ & \small $1.6408$
&\small $1.6745$ & \small $1.7038$ & \small $1.7297$  \\
$\sigma=8$ & \small$1.4319$ & \small $1.4888$ & \small $1.5305$ &
\small $1.5639$
&\small $1.5919$ & \small $1.6162$ & \small $1.6376$  \\
 $\sigma=9$ & \small$1.3915$ & \small $1.4405$ & \small
$1.4763$ & \small $1.5049$
&\small $1.5288$ & \small $1.5494$ & \small $1.5677$  \\
\hline
\end{tabular}
\centerline{}   \caption{\small Exponential growth rates of
$\langle k, 4, \sigma\rangle$-structures where $\sigma\ge 3$.
}\label{T:1}
\end{center}
\end{table}
The ideas of this paper build on those of
\cite{Reidys:07pseu,Reidys:07lego}. In \cite{Reidys:07lego}
core-structures are introduced via which $\sigma$-canonical
$k$-noncrossing structures can be enumerated. $\langle
k,4,\sigma\rangle$-structures where $\sigma\ge 3$ can however not
be enumerated via core-structures, see Fig.\ref{F: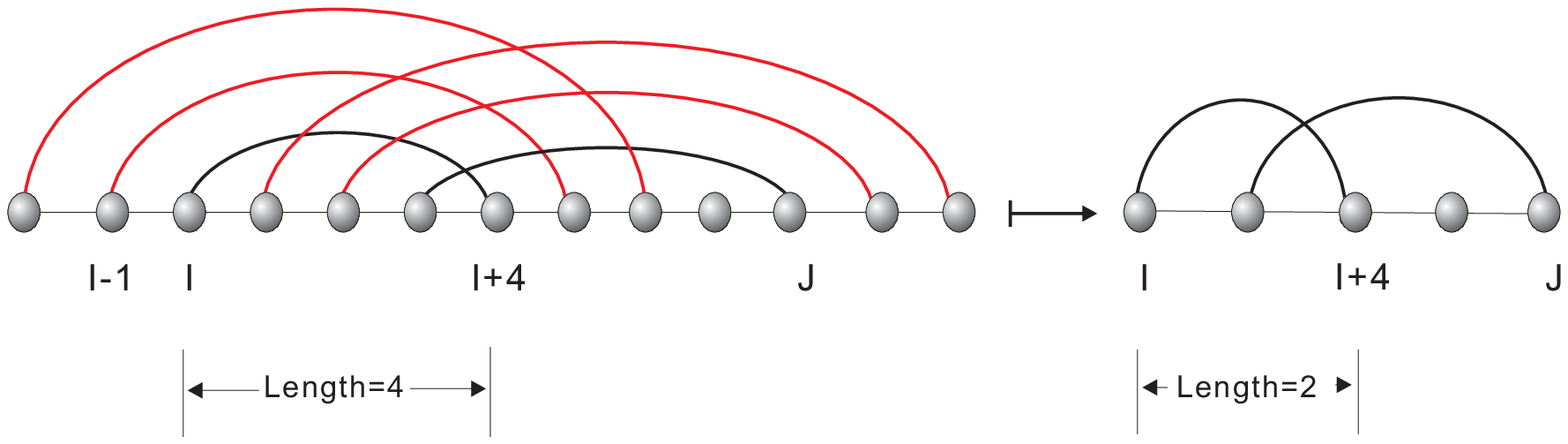}.
\begin{figure}[ht]
\centerline{\epsfig{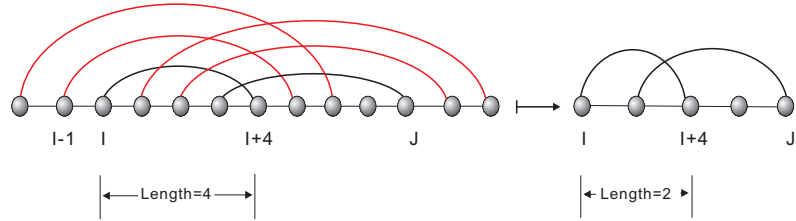}\hskip15pt}
\caption{\small Core-structures will in general have $2$-arcs: the
structure $\delta\in T_{3,3}^{[4]}(12)$ (lhs) is mapped into its
core $c(\delta)$ (rhs). Clearly $\delta$ has arc-length $\ge 4$
and as a consequence of the collapse of the stack
$((I+1,J+2),(I+2,J+1),(I+3,J))$ (the red arcs are being removed)
into the arc $(I+3,J)$, $c(\delta)$ contains the arc $(I,I+4)$,
which is, after relabeling, a $2$-arc.}\label{F:ma_reid5.eps}
\end{figure}
This is a result from the fact that the core-map, obtained by
identifying stacks by single arcs does not preserve arc-length.
Therefore we have to introduce a new set of $k$-noncrossing
diagrams, denoted by $T_{k}^*(n,h)$. This class is designed for
inducing a new type of cores, $C_k^*(n',h')$ (see
Theorem~\ref{T:core-3}). Then we proceed using ideas similar to
those in \cite{Reidys:07lego} and prove our exact enumeration
result, Theorem~\ref{T:core-3}. As for the singularity analysis
the main contribution is Claim $1$ of Theorem~\ref{T:arc-3}: a new
functional equation for ${\bf T}_{k,\sigma}^{[4]}(z)$.


\section{Preliminaries}\label{S:pre}

In this Section we provide some background on the generating
functions of $k$-noncrossing matchings \cite{Chen,Wang:07} and
$k$-noncrossing RNA structures \cite{Reidys:07pseu,Reidys:07asy1}.
We denote the set (number) of $k$-noncrossing RNA structures with
arc-length $\ge\lambda$ and stack-size $\ge\sigma$ by
$T_{k,\sigma}^{[\lambda]}(n)$ (${\sf
T}_{k,\sigma}^{[\lambda]}(n)$). By abuse of notation we omit the
indices $\lambda$ and $\sigma$ in $T_{k,\sigma}^{[\lambda]}(n)$
(${\sf T}_{k,\sigma}^{[\lambda]}(n)$) for $\lambda=2$ and
$\sigma=1$. A $k$-noncrossing core-structure is a $k$-noncrossing
RNA structures in which there exists {\it no} two arcs of the form
$(i,j),(i+1,j-1)$.  The set (number) of $k$-noncrossing
core-structures and $k$-noncrossing core-structures with exactly
$h$ arcs is denoted by $C_k(n)$(${\sf C}_k(n)$) and $C_k(n,h)$
(${\sf C}_k(n,h)$), respectively. Furthermore we denote by
$f_{k}(n,\ell)$ the number of $k$-noncrossing diagrams with
arbitrary arc-length and $\ell$ isolated vertices over $n$
vertices and set ${\sf
M}_k(n)=\sum_{\ell=0}^{n}f_{k}(n,\ell)$. That is, ${\sf
M}_k(n)$ is the number of all $k$-noncrossing partial matchings.
In Fig.\ref{arc4stack3-4} we display the various types of diagrams
involved.
\begin{figure}[ht]
\centerline{\epsfig{file=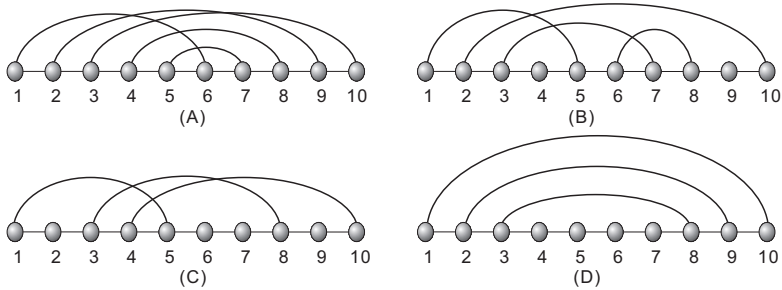,width=0.7\textwidth}\hskip8pt}
\caption{\small Basic diagram types: (A) $4$-noncrossing matching
(no isolated points), (B) $3$-noncrossing partial matching
(isolated points $4$ and $9$), (C) $4$-noncrossing RNA structure
with arc-length $\ge 4$ and stack length $\ge 1$, (D) RNA
structure with arc-length $\ge 5$ and stack-length $\ge
3$.}\label{arc4stack3-4}
\end{figure}
\subsection{$k$-noncrossing partial matchings and RNA structures}
The following identities are due to Grabiner and Magyar \cite{Grabiner:93a}
\begin{eqnarray}\label{E:ww0}
\label{E:ww1}
\sum_{n\ge 0} f_{k}(n,0)\cdot\frac{x^{n}}{n!} & = &
\det[I_{i-j}(2x)-I_{i+j}(2x)]|_{i,j=1}^{k-1} \\
\label{E:ww2}
\sum_{n\ge 0}\left\{\sum_{\ell=0}^nf_{k}(n,\ell)\right\}\cdot\frac{x^{n}}{n!}
&= &
e^{x}\det[I_{i-j}(2x)-I_{i+j}(2x)]|_{i,j=1}^{k-1} \ ,
\end{eqnarray}
where $I_{r}(2x)=\sum_{j \ge 0}\frac{x^{2j+r}}{{j!(r+j)!}}$
denotes the hyperbolic Bessel function of the first kind of order
$r$. Eq.~(\ref{E:ww1}) and (\ref{E:ww2}) allow only ``in
principle'' for explicit computation of the numbers $f_k(n,\ell)$
and in view of $f_{k}(n,\ell) ={n \choose \ell} f_{k}(n-\ell,0) $
everything can be reduced to (perfect) matchings, where we have
the following situation: there exists an asymptotic approximation
of the determinant of the hyperbolic Bessel function for general
$k$ due to \cite{Wang:07} and employing the subtraction of
singularities-principle \cite{Odlyzko:95a} one can prove
\cite{Wang:07}
\begin{equation}\label{E:f-k-imp}
\forall\, k\in\mathbb{N};\qquad  f_{k}(2n,0) \, \sim  \, c_k  \,
n^{-((k-1)^2+(k-1)/2)}\, (2(k-1))^{2n},\qquad c_k>0 \ ,
\end{equation}
where $\rho_k=\frac{1}{2(k-1)}$ is the dominant real singularity
of $\sum_{n\ge 0}f_k(2n,0)z^{2n}$. For $\langle
k,2,1\rangle$-structures we have
\cite{Reidys:07pseu,Reidys:07asy1}
\begin{eqnarray}
\label{E:sum} {\sf T}_{k}^{}(n)  & = &  \sum_{b=0}^{\lfloor
n/2\rfloor}(-1)^{b}{n-b \choose b}\,{\sf M}_k(n-2b)  \\
{\sf T}_{k}^{}(n) & \sim & c_k^{}  \,
n^{-((k-1)^2+(k-1)/2)}\, (\gamma_k^{})^{-n},\qquad c_k^{}>0
\ ,
\end{eqnarray}
where $\gamma_k^{}$ is the unique, minimal solution of
$\frac{z}{z^2-z+1}=\rho_k$, see Tab.~\ref{T:tab1b}.
\begin{table}
\begin{center}
\begin{tabular}{|c|c|c|c|c|c|c|c|c|c|c|}
\hline
       $k$ & $2$ & $3$  & $4$ &$5$ &$6$ &$7$ & $8$ & $9$ & $10$   \\
\hline $\gamma^{-1}_k$ &2.6180& 4.7913&
6.8541 & 8.8875 & 10.9083 & 12.9226 & 14.9330 & 16.9410 & 18.9472 \\
\hline
\end{tabular}
\end{center}
\centerline{}  \caption{\small The exponential growth rates of
$\langle k,2,1\rangle$-structures.} \vspace*{-11pt}
\label{T:tab1b}
\end{table}
For $\langle k,4,1\rangle$-structures we have according to
\cite{Han:08arc4} the following exact enumeration result
\begin{eqnarray}\label{E:KK}
{\sf T}_{k}^{[4]}(n) & = & \sum_{b \le \lfloor \frac{n}{2}\rfloor}
(-1)^b\, \lambda(n,b)\, {\sf M}_k(n-2b), \qquad 4\le k\le 9 \ ,
\end{eqnarray}
where $\lambda(n,b)$ denotes the number of way of selecting $b$
arcs of length $\le 3$ over $n$ vertices and
\begin{equation}
 {\sf T}_{k}^{[4]}(n) \sim c_k\, n^{-((k-1)^2+(k-1)/2)}\,
\left({\gamma_k^{[4]}}\right)^{-n} \,
\end{equation}
where $\gamma_k^{[4]}$ is the unique positive, real solution of
$\frac{z\, r_{1}(-z^2)}{1-zr_{1}(-z^2)}=\rho_k$ where $r_1(z)$ satisfies
\begin{eqnarray*}
u(z)   & = & \sqrt{1+4z-4z^2-6z^3+4z^4+z^6} \\
\label{E:oben} r_1(z) & = &
-\frac{-2z^2+z^3-1+u(z)}{2(1-2z-z^2+z^4)} \ .
\end{eqnarray*}
In Tab.~\ref{T:tab2} we present the exponential growth rates for
${\sf T}_k^{[4]}(n)$ for $k=4,\dots,9$.
\begin{table}
\begin{center}
\begin{tabular}{|c|c|c|c|c|c|c|}
\hline
 $k$   & $4$ &$5$ &$6$ &$7$ &
$8$ & $9$   \\
\hline ${\gamma_k^{[4]}}^{-1}$& $6.5290$ & $8.6483$ & $10.7176$ &
$12.7635$ & $14.7963$ & $16.8210$  \\  \hline
\end{tabular}
\centerline{}  \caption{\small The exponential growth rates of
$\langle k,4,1\rangle$-structures.} \label{T:tab2}\vspace*{-12pt}
\end{center}
\end{table}
For $\langle k,2,\sigma\rangle$-structures
we have according to
\cite{Reidys:07lego}
\begin{equation}
{\bf T}_{k,\sigma}(x)=\frac{1}{u_0x^2-x+1}\sum_{n\ge
0}f_k(2n,0)\left(\frac{\sqrt{u_0}\,x}{u_0x^2-x+1}\right)^{2n}
\end{equation}
where $u_0=\frac{(x^2)^{\sigma-1}}{(x^2)^\sigma-x^2+1}$ and
\begin{equation}
{\sf T}_{k,\sigma}(n)\sim c_k\, n^{-((k-1)^2+(k-1)/2)}\,
\left(\gamma_k^{-1}\right)^n \,
\end{equation}
where $\gamma_{k,\sigma}$ is a positive real dominant singularity
of $\sum_{n\ge 0}{\sf T}_{k,\sigma}(n)x^n$ and the minimal
positive real solution of the equation
\begin{equation}
\frac{\sqrt{\frac{(x^2)^{\sigma-1}}{(x^2)^{\sigma}-x^2+1}}\,x}
{\left(\frac{(x^2)^{\sigma-1}}{(x^2)^\sigma-x^2+1}\right)x^2-x+1}=\rho_k.
\end{equation}
In Table~\ref{T:tab3} we present the exponential growth rates of
$\langle k,2,\sigma\rangle$-structures.
\begin{table}
\begin{center}
{\small
\begin{tabular}{|l|c|c|c|c|c|c|c|c|c|}
\hline $\quad k$ &  $2$   & $3$   &  $4$   &$5$     &$6$     &  $7$    &  $8$    & $9$     & $10$\\
\hline
$\sigma=2$ & 1.9680 & 2.5881 & 3.0382 & 3.4138 & 3.7438 & 4.0420 & 4.3162 & 4.5715 & 4.8115  \\
$\sigma=3$ & 1.7160 & 2.0477 & 2.2704 & 2.4466 & 2.5955 & 2.7259 & 2.8427 & 2.9490 & 3.0469  \\
$\sigma=4$ & 1.5782 & 1.7984 & 1.9410 & 2.0511 & 2.1423 & 2.2209 & 2.2904 & 2.3529 & 2.4100  \\
$\sigma=5$ & 1.4899 & 1.6528 & 1.7561 & 1.8347 & 1.8991 & 1.9540 & 2.0022 & 2.0454 & 2.0845  \\
\hline
\end{tabular}
} \centerline{}   \caption{\small The exponential growth rates
$\langle k,2,\sigma\rangle$-structures
\cite{Reidys:07lego}.}\label{T:tab3}
\end{center}
\end{table}

\subsection{Singularity analysis}
Let us next recall some basic fact about analytic functions.
Pfringsheim's Theorem \cite{Titmarsh:39} guarantees that each
power series with positive coefficients has a positive real
dominant singularity. This singularity plays a key role for the
asymptotics of the coefficients. In the proof of
Theorem~\ref{T:arc-3} it will be important to deduce relations
between the coefficients from functional equations of generating
functions. The class of theorems that deal with such deductions
are called transfer-theorems \cite{Flajolet:07a}. We consider a
specific domain in which the functions in question are analytic
and which is ``slightly'' bigger than their respective radius of
convergence. It is tailored for extracting the coefficients via
Cauchy's integral formula. Details on the method can be found in
\cite{Flajolet:07a}. In case of $D$-finite functions we have
analytic continuation in any simply connected domain containing
zero \cite{Wasow:87} and all prerequisites of singularity analysis
are met. To be precise, given two numbers $\phi,R$, where $R>1$
and $0<\phi<\frac{\pi}{2}$ and $\rho\in\mathbb{R}$, the open
domain $\Delta_\rho(\phi,R)$ is defined as
\begin{equation}
\Delta_\rho(\phi,R)=\{ z\mid \vert z\vert < R, z\neq \rho,\,
\vert {\rm Arg}(z-\rho)\vert >\phi\}
\end{equation}
A domain is a $\Delta_\rho$-domain if it is of the form
$\Delta_\rho(\phi,R)$ for some $R$ and $\phi$.
A function is $\Delta_\rho$-analytic if it is analytic in some
$\Delta_\rho$-domain.
We use the notation
\begin{equation}\label{E:genau}
\left(f(z)=O\left(g(z)\right) \
\text{\rm as $z\rightarrow \rho$}\right)\quad \Longleftrightarrow \quad
\left(f(z)/g(z) \ \text{\rm is bounded as $z\rightarrow \rho$}\right)
\end{equation}
and if we write $f(z)=O(g(z))$ it is implicitly assumed that $z$
tends to a (unique) singularity. $[z^n]\,f(z)$ denotes the
coefficient of $z^n$ in the power series expansion of $f(z)$ around
$0$.

\begin{theorem}\label{T:transfer1}{\bf }
\cite{Flajolet:07a} Let $f(z),g(z)$ be $D$-finite,
$\Delta_{\rho}$-analytic functions with unique dominant singularity
$\rho$ and suppose
\begin{equation}
f(z) = O( g(z))\quad  \mbox{ for  }\quad z\rightarrow \rho \ .
\end{equation}
Then we have
\begin{equation}
[z^n]f(z)= \,K \,\left(1-O(\frac{1}{n})\right)\,  [z^n]g(z) \ ,
\end{equation}
where $K$ is some constant.
\end{theorem}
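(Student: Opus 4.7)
The plan is to combine the standard Flajolet--Odlyzko transfer machinery with the local analytic structure that $D$-finiteness imposes on $f$ and $g$ at the common dominant singularity $\rho$. Since each function satisfies a homogeneous linear ODE with polynomial coefficients, and $\rho$ is at worst a regular singular point of that ODE, Frobenius theory provides, in a slit neighborhood of $\rho$ contained in the $\Delta_\rho$-domain, convergent Puiseux--logarithmic expansions
\begin{equation*}
h(z)=\sum_j c_j^{(h)}\,(1-z/\rho)^{\alpha_j}\bigl(\log(1-z/\rho)\bigr)^{k_j},\qquad h\in\{f,g\},
\end{equation*}
where the exponents $\alpha_j$ are roots of the indicial equation together with their non-negative integer shifts. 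In particular the set of exponents that actually contribute is discrete and successive exponents differ by a positive integer.

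Next I would use $f(z)=O(g(z))$ to extract the constant $K$. The hypothesis forces the leading singular exponent of $f$ at $\rho$ to be at most that of $g$. Writing $g(z)\sim G\,(1-z/\rho)^{-\alpha}(\log(1-z/\rho))^{k}$ and $f(z)\sim F\,(1-z/\rho)^{-\alpha}(\log(1-z/\rho))^{k}$, set $K:=F/G$ (and $K=0$ if the leading exponent of $f$ is strictly smaller). By construction $f(z)-K\,g(z)$ has a strictly milder singularity; thanks to the integer spacing of the Frobenius exponents one gains a full unit, so that $f(z)-K\,g(z)=O\bigl((1-z/\rho)\,g(z)\bigr)$ as $z\to\rho$.

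Finally I would apply singularity analysis by deforming the contour of Cauchy's formula into a Hankel contour around $\rho$, which is legitimate because of the $\Delta_\rho$-analyticity supplied by $D$-finiteness. The standard estimates then yield $[z^n]g(z)=\Theta\bigl(\rho^{-n}n^{\alpha-1}(\log n)^{k}\bigr)$, and the improved local bound for $f-K\,g$ transfers to $[z^n](f-K\,g)(z)=O\bigl(\rho^{-n}n^{\alpha-2}(\log n)^{k}\bigr)$. Dividing and rearranging gives
\begin{equation*}
[z^n]f(z)=K\,[z^n]g(z)+O\!\left(\frac{[z^n]g(z)}{n}\right)=K\bigl(1-O(1/n)\bigr)\,[z^n]g(z),
\end{equation*}
which is the claim.

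The step I expect to be the main obstacle is justifying the full one-unit gap between the leading and sub-leading singular contributions to $f-K\,g$. The bare hypothesis $f=O(g)$ only equates the two leading singular exponents, and in general the next contribution could be arbitrarily close, producing merely an $O(1/n^{\delta})$ error with $\delta<1$. It is precisely the discreteness of the indicial roots (modulo integers) attached to the ODEs satisfied by $f$ and $g$ that rules this out and yields the sharp $O(1/n)$ rate stated in the theorem.
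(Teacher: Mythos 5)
The paper offers no proof of this statement to compare against: Theorem~\ref{T:transfer1} is imported verbatim from Flajolet--Sedgewick as a black-box transfer theorem. Judged on its own merits, your sketch follows the right overall strategy (local expansion at $\rho$, subtraction of the leading singular part, Hankel-contour transfer), but it breaks down at precisely the step you flag, and the fix you propose does not work. The exponents occurring in the Frobenius expansion of a $D$-finite function at a regular singular point are the non-negative integer shifts of \emph{all} roots of the indicial equation, and distinct indicial roots need not be congruent modulo $1$ (roots $0$ and $1/2$, say, produce exponents $0,1/2,1,3/2,\dots$). Hence after subtracting $K\,g$ the next singular contribution to $f-Kg$ may sit only $\delta<1$ above the leading one, and the transfer yields merely $[z^n](f-Kg)=O\bigl(n^{-\delta}[z^n]g\bigr)$, not the claimed $O(1/n)$. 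Discreteness of the indicial roots rules out exponents accumulating at the leading one, but it does not force a full unit gap. A further unaddressed point is that $D$-finiteness does not guarantee that $\rho$ is a \emph{regular} singular point of the ODE; at an irregular singular point the local behaviour involves exponential factors $e^{P(1/(1-z/\rho))}$ and the Puiseux--logarithmic expansion you invoke simply does not exist.

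There is also a gap in the extraction of the constant $K$. The hypothesis $f(z)=O(g(z))$ only bounds $|f/g|$ near $\rho$; it does not assert that $f/g$ converges, so the identification $K=F/G$ presupposes that $f$ and $g$ share the same leading exponent and logarithmic power, which is not implied. Worse, in the case you explicitly allow ($f$ strictly milder than $g$, $K=0$) the asserted conclusion reads $[z^n]f=0\cdot(1-O(1/n))[z^n]g=0$, which is false for any non-polynomial $f$. What the Flajolet--Odlyzko machinery genuinely delivers from the bare hypothesis $f=O(g)$ is the coefficient bound $[z^n]f=O([z^n]g)$; the multiplicative form with an explicit constant and an $O(1/n)$ error needs the stronger local input $f(z)=K\,g(z)\bigl(1+O(1-z/\rho)\bigr)$ together with $g$ of standard algebraic--logarithmic type. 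That stronger input is in fact what is available where the theorem is applied in this paper (via eq.~(\ref{E:f-k-imp}) and the supercritical composition scheme behind Theorem~\ref{T:realdeal}), so your proof cannot be repaired from the stated hypotheses alone, only by strengthening them to match the actual use case.
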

Let ${\bf F}_k(z)=\sum_{n}f_k(2n,0)z^{2n}$, the ordinary generating function
of $k$-noncrossing matchings. It follows from eq.~(\ref{E:ww1}) that the
power series ${\bf F}_k(z)$ is $D$-finite, i.e.~there exists some $e\in \mathbb{N}$
such that
\begin{equation}\label{E:JK0}
q_{0,k}(z)\frac
{d^e}{dz^e}{\bf F}_k(z)+q_{1,k}(z)\frac{d^{e-1}}{dz^{e-1}}{\bf F}_k(z)+\cdots+
q_{e,k}(z){\bf F}_k(z)=0  \  ,
\end{equation}
where $q_{j,k}(z)$ are polynomials. The key point is that any
dominant singularity of ${\bf F}_k(z)$ is contained in the set of
roots of $q_{0,k}(z)$ \cite{Stanley:80}, which we denote by $M_k$.
The polynomials $q_{0,k}(z)$ and their sets of roots for
$k=3,\dots,9$ are given in Table~\ref{T:111}. Accordingly, ${\bf
F}_k(z)$ has singularities $\pm \rho_k$, where
$\rho_k=(2(k-1))^{-1}$.
\begin{center}
\begin{table}
\begin{tabular}{|c|c|c|}
\hline
$k$ & $q_{0,k}(z)$ & $M_k$ \\
\hline $3$ & $({1}/{4}-4z^2)\, z^2$ & $\{\pm{1}/{4}\}$ \\
\hline $4$ & $(144\, z^4-40\, z^2 +1)  \, z^6$ &
$\{\pm{1}/{2},\pm{1}/{6}\}$\\
\hline $5$ & $(-80\, z^2+ 1024\, z^4 + 1)\, z^8$ &
$\{\pm{1}/{4},\pm{1}/{8}\}$\\
\hline $6$ & $(-4144\, z^4 + 140\, z^2+14400\, z^6 +1)\,z^{10}$ &
$\{\pm{1}/{2},\pm{1}/{6},\pm{1}/{10},
 \}$\\
\hline $7$ & $(-1-12544\, z^4+224\, z^2+147456\,z^6)\,z^{12}$ &
$\{\pm{1}/{4},\pm{1}/{8},\pm{1}/{12} \}$\\
\hline $8$ & $(1-336z^2+31584z^4+2822400z^8-826624z^6)z^{14}$ &
$\{\pm{1}/{2},\pm{1}/{6},\pm{1}/{10},\pm{1}/{14}\}$\\
\hline $9$ & $-(-480z^2+1+69888z^4+37748736z^8-3358720z^6)z^{16}$
&
$\{\pm{1}/{4},\pm{1}/{8},\pm{1}/{12},\pm{1}/{16}\}$\\
\hline
\end{tabular}
\centerline{}
\caption{\small The polynomials $q_{0,k}(z)$ and their nonzero roots.}
\label{T:111}
\end{table}
\end{center}
As a consequence of Theorem~\ref{T:transfer1},
eq.~(\ref{E:f-k-imp}) and the so called supercritical case of
singularity analysis \cite{Flajolet:07a}, VI.9., p.~400, we give
the following result\cite{Reidys:08lego2} tailored for our
functional equations.
\begin{theorem}\label{T:realdeal}
Suppose $\vartheta_{\sigma}(z)$ is algebraic over $K(z)$, regular
for $\vert z\vert <\delta$ and satisfies $\vartheta_{\sigma}(0)=0$.
Suppose further $\gamma_{k,\sigma}$ is the unique solution with
minimal modulus $<\delta$ of the two equations
$\vartheta_{\sigma}(x)=\rho_k$ and $\vartheta_{\sigma}(x)=-\rho_k$.
Then $\gamma_{k,\sigma}$ is the unique dominant singularity of
${\bf F}_k(\vartheta_{\sigma}(z))$ and
\begin{equation}
[z^n]\,{\bf F}_k(\vartheta_{\sigma}(z)) \sim c_k  \,
n^{-((k-1)^2+(k-1)/2)}\, \left(\gamma_{k,\sigma}^{-1}\right)^n \ .
\end{equation}
\end{theorem}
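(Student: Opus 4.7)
The plan is to apply the supercritical composition scheme of Flajolet--Sedgewick (\cite{Flajolet:07a}, VI.9) in combination with the transfer theorem (Theorem~\ref{T:transfer1}), using the coefficient asymptotic of $f_k(2n,0)$ from eq.~(\ref{E:f-k-imp}) as the input for the outer function.

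First, I would locate the dominant singularity of ${\bf F}_k(\vartheta_{\sigma}(z))$. Since $\vartheta_{\sigma}$ is regular on $|z|<\delta$, any singularity of the composition inside this disk must arise from a point $z$ at which $\vartheta_{\sigma}(z)$ hits a singularity of ${\bf F}_k$. By eq.~(\ref{E:f-k-imp}), the $D$-finiteness in eq.~(\ref{E:JK0}), and Table~\ref{T:111}, the dominant singularities of ${\bf F}_k$ are exactly $\pm\rho_k$. The candidate singularities of the composition are then the preimages of $\{\pm\rho_k\}$ under $\vartheta_{\sigma}$, and the theorem's hypothesis forces $\gamma_{k,\sigma}$ to be the unique one of minimal modulus, hence the unique dominant singularity of ${\bf F}_k(\vartheta_{\sigma}(z))$.

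Second, I would promote eq.~(\ref{E:f-k-imp}) to a local singular expansion of ${\bf F}_k$ at $\pm\rho_k$. Since ${\bf F}_k$ is $D$-finite and hence $\Delta$-analytic at $\pm\rho_k$, singularity analysis in the reverse direction yields
\[
{\bf F}_k(w)\;=\;O\!\left((1-(w/\rho_k)^2)^{(k-1)^2+(k-1)/2-1}\right),\qquad w\to\pm\rho_k.
\]
Because $\vartheta_{\sigma}$ is algebraic and regular at $\gamma_{k,\sigma}$ with $\vartheta_{\sigma}'(\gamma_{k,\sigma})\neq 0$ (otherwise a branch of preimages of strictly smaller modulus would exist, contradicting minimality), a first-order Taylor expansion $\vartheta_{\sigma}(z)=\pm\rho_k+\vartheta_{\sigma}'(\gamma_{k,\sigma})(z-\gamma_{k,\sigma})+O((z-\gamma_{k,\sigma})^2)$ substitutes into the above bound to give
\[
{\bf F}_k(\vartheta_{\sigma}(z))\;=\;O\!\left((1-z/\gamma_{k,\sigma})^{(k-1)^2+(k-1)/2-1}\right),\qquad z\to\gamma_{k,\sigma}.
\]
As the composition is again $D$-finite and thus analytic in a $\Delta_{\gamma_{k,\sigma}}$-domain, Theorem~\ref{T:transfer1} with $g(z)=(1-z/\gamma_{k,\sigma})^{(k-1)^2+(k-1)/2-1}$ transfers this local bound into the stated coefficient asymptotic.

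The main obstacle I expect is the clean reading of the singular exponent of ${\bf F}_k$ at $\pm\rho_k$ from its coefficient asymptotic: passing from the $n^{-\beta}\rho_k^{-2n}$ coefficient behavior (with $\beta=(k-1)^2+(k-1)/2$) to a $(1-(w/\rho_k)^2)^{\beta-1}$ local form requires ruling out logarithmic corrections and spurious algebraic prefactors. Here the explicit hyperbolic-Bessel determinant in eq.~(\ref{E:ww1}) together with the symmetry ${\bf F}_k(-z)={\bf F}_k(z)$ supplies the structural input needed to confirm the singular form at both $+\rho_k$ and $-\rho_k$, and the strict-minimality hypothesis on $|\gamma_{k,\sigma}|$ then isolates $\gamma_{k,\sigma}$ as the sole driver of the asymptotic.
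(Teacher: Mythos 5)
Your overall strategy --- treat ${\bf F}_k(\vartheta_{\sigma}(z))$ as a supercritical composition, locate the dominant singularity among the preimages of $\pm\rho_k$, and transfer a local singular expansion via Theorem~\ref{T:transfer1} --- is exactly the route the paper indicates: the paper gives no self-contained proof of this theorem, deferring to the supercritical case of singularity analysis in \cite{Flajolet:07a}, VI.9, together with eq.~(\ref{E:f-k-imp}) and the reference \cite{Reidys:08lego2}. Your identification of $\gamma_{k,\sigma}$ as the unique dominant singularity is in the right spirit, although making ``the only way the composition becomes singular is by hitting $\pm\rho_k$'' rigorous requires the positivity/monotonicity of $\vartheta_{\sigma}$ on the positive real axis (so that $|\vartheta_{\sigma}(z)|\le\vartheta_{\sigma}(|z|)<\rho_k$ for $|z|<\gamma_{k,\sigma}$), which is part of the supercritical hypotheses and should be stated.

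There are two concrete gaps in your second step. First, the direction of your inference is backwards: the coefficient asymptotics $f_k(2n,0)\sim c_k\,n^{-\beta}\rho_k^{-2n}$ with $\beta=(k-1)^2+(k-1)/2$ do not by themselves yield a local singular expansion of ${\bf F}_k$ at $\pm\rho_k$; singularity analysis transfers function behaviour to coefficient behaviour, not conversely. The actual chain of reasoning (in \cite{Wang:07}) establishes the singular expansion of the Bessel determinant of eq.~(\ref{E:ww1}) first and obtains eq.~(\ref{E:f-k-imp}) as a corollary, so your proof must quote that singular expansion directly rather than ``promote'' the coefficient asymptotics. You flag this as an obstacle and point at the determinant, which is the right source, but as written the step is a genuine gap. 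Second, the singular form you write down, $(1-(w/\rho_k)^2)^{\beta-1}$, is wrong for odd $k$: there $\beta-1$ is a nonnegative integer, $(1-z/\gamma_{k,\sigma})^{\beta-1}$ is a polynomial, and Theorem~\ref{T:transfer1} applied with that $g$ yields nothing. For integer $\beta$ the correct local form carries a logarithmic factor, of the shape $(1-(w/\rho_k)^2)^{\beta-1}\log\bigl(1-(w/\rho_k)^2\bigr)$, and it is precisely this logarithm that produces the $n^{-\beta}$ law under transfer; the log cannot be ``ruled out'' --- it must be kept. A smaller point: your argument that $\vartheta_{\sigma}'(\gamma_{k,\sigma})\neq 0$ because otherwise ``a branch of preimages of strictly smaller modulus would exist'' is not correct reasoning --- a vanishing derivative would not move preimages inward, it would multiply the singular exponent by the order of vanishing and change the polynomial factor in the asymptotics; the correct justification is again the strict monotonicity of $\vartheta_{\sigma}$ on the relevant real interval.
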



\section{Exact Enumeration}\label{S:core-3}


In this section we present the exact enumeration of
$\langle k,4,\sigma\rangle$-structures, where $\sigma\ge 3$.
The structure of
our formula is analogous to the M\"obius inversion formula proved
in \cite{Reidys:07lego}: $ {\sf T}_{k,\sigma}^{}(n,h)=
\sum_{b=\sigma-1}^{h-1} {b+(2-\sigma)(h-b)-1 \choose h-b-1} {\sf
C}_k(n-2b,h-b)$, which relates the number of all structures and
the number of core-structures. As we pointed out in the
introduction the latter cannot be used in order to enumerate
$k$-noncrossing structures with arc-length $\ge 4$,  see
Fig.\ref{F:ma_reid5.eps}. We consider the arc-sets
$$
\beta_2=\{(i,i+2)\mid i+1\ \text{\rm isolated}\}\quad\text{\rm and}\quad
\beta_3=
\{(i,i+3)\mid i+1,i+2\ \text{\rm isolated}\}
$$
and set $\beta=\beta_2\cup\beta_3$. Furthermore
\begin{eqnarray}\label{E:*}
C^*_k(n,h) & = & \{\delta\mid \delta\in C_k(n,h);\,
\text{\rm $\delta$ contains no $1$-arc and no $\beta$-arc}\}\\
T^*_k(n,h) & = & \{\delta\mid \delta\in T_k(n,h);\,
\text{\rm $\delta$ contains no $1$-arc and no $\beta$-arc}\} \ .
\end{eqnarray}
\begin{theorem}\label{T:core-3}
Suppose we have $k,h,\sigma\in \mathbb{N}$, $k\ge 2$, $h\le n/2$
and $\sigma\ge 3$. Then the number of $\langle k,4,\sigma\rangle$-structures
having exactly $h$ arcs is given by
\begin{equation}\label{E:relation1}
{\sf T}_{k,\sigma}^{[4]}(n,h)=\sum_{b=\sigma-1}^{h-1}
{b+(2-\sigma)(h-b)-1 \choose h-b-1} {\sf C}_k^*(n-2b,h-b)
\end{equation}
where ${\sf C}_{k}^{*}(n,h)$ satisfies ${\sf C}_{k}^{*}(n,0)=1$ and
\begin{eqnarray}\label{E:relation2}
{\sf C}_{k}^{*}(n,h)=\sum_{b=0}^{h-1}(-1)^{h-b-1}{h-1\choose b}{\sf
T}_k^{*}(n-2h+2b+2,b+1) \mbox{\quad for } \ h\ge 1 \ .
\end{eqnarray}
Furthermore, ${\sf T}_k^{*}(n,h)$ satisfies
\begin{equation}\label{E:relation3}
{\sf T}_k^{*}(n,h)=\sum_{0\le j_1+j_2+j_3\le
h}(-1)^{j_1+j_2+j_3}\lambda(n,j_1,j_2,j_3)\,
f_{k}(n-2j_1-3j_2-4j_3,n-2h-j_2-2j_3)
\end{equation}
where
$$
\lambda(n,j_1,j_2,j_3)={n-j_1-2j_2-3j_3 \choose
j_1,j_2,j_3,n-2j_1-3j_2-4j_3} \ .
$$
\end{theorem}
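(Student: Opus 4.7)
The statement aggregates three identities which I plan to establish bottom-up, in the order $(\ref{E:relation3})\Rightarrow(\ref{E:relation2})\Rightarrow(\ref{E:relation1})$, since (\ref{E:relation1}) relies on the core-enumeration ${\sf C}_k^{*}$, which itself is produced by Möbius-inverting an identity whose other ingredient ${\sf T}_k^{*}$ is computed via (\ref{E:relation3}). For (\ref{E:relation3}) I will apply inclusion--exclusion over the forbidden local configurations defining $T_k^{*}$: $1$-arcs, $\beta_2$-arcs and $\beta_3$-arcs. The decisive observation is that each such configuration is \emph{insulated}: no arc of the ambient diagram can have an endpoint strictly inside a $1$-arc (no room) or a $\beta$-arc (the interior vertices are isolated by definition). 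Consequently, once disjoint sets of $j_1$ $1$-arcs, $j_2$ $\beta_2$-arcs and $j_3$ $\beta_3$-arcs are specified, their placements and the remainder of the diagram decouple: the placements number $\lambda(n,j_1,j_2,j_3)$, and the complement is an arbitrary $k$-noncrossing diagram on $n-2j_1-3j_2-4j_3$ vertices with $h-j_1-j_2-j_3$ arcs and hence $n-2h-j_2-2j_3$ isolated vertices, counted by $f_k(n-2j_1-3j_2-4j_3,\,n-2h-j_2-2j_3)$. Summation with sign $(-1)^{j_1+j_2+j_3}$ produces~(\ref{E:relation3}).

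For (\ref{E:relation2}) I will invoke the stack-collapse map of \cite{Reidys:07lego}, which sends each $\delta\in T_k^{*}(n,h)$ to the core $c(\delta)$ obtained by contracting every maximal stack of $\delta$ to its innermost arc. A short length and isolation bookkeeping shows that $c(\delta)$ contains a $1$-arc or $\beta$-arc only if $\delta$ already does, so the map restricts to $T_k^{*}(n,h)\to\bigcup_{b\ge 1}C_k^{*}\bigl(n-2(h-b),b\bigr)$. Over each core of $b$ arcs there are $\binom{h-1}{b-1}$ preimages, one per ordered stack-length composition $\ell_1+\cdots+\ell_b=h$ with $\ell_i\ge 1$. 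This yields the triangular identity
$$
{\sf T}_k^{*}(n,h)\;=\;\sum_{b=1}^{h}\binom{h-1}{b-1}\,{\sf C}_k^{*}\bigl(n-2(h-b),\,b\bigr).
$$
Setting $m:=n-2h$ and $\tilde A(m,h):={\sf T}_k^{*}(m+2h,h)$, $\tilde B(m,b):={\sf C}_k^{*}(m+2b,b)$ turns this into the standard triangular system $\tilde A(m,h)=\sum_{b}\binom{h-1}{b-1}\tilde B(m,b)$, whose Möbius inverse $\tilde B(m,h)=\sum_{b}(-1)^{h-b}\binom{h-1}{b-1}\tilde A(m,b)$ becomes~(\ref{E:relation2}) upon undoing the substitution and reindexing $b\mapsto b+1$.

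For (\ref{E:relation1}) I specialise the collapse map to $T_{k,\sigma}^{[4]}(n,h)$. Since arc-length $\ge 4$ excludes all $1$-arcs and $\beta$-arcs, the restricted image again lies in $C_k^{*}$, and the stack-length bound $\ge\sigma$ converts the fibre-count over a core of $h-b$ arcs into the number of compositions $\ell_1+\cdots+\ell_{h-b}=h$ with $\ell_i\ge\sigma$, i.e.\ $\binom{h-(\sigma-1)(h-b)-1}{h-b-1}$, which is algebraically identical to $\binom{b+(2-\sigma)(h-b)-1}{h-b-1}$. The delicate reverse direction --- that every $\sigma$-inflation of a core in $C_k^{*}$ actually produces a diagram of arc-length $\ge 4$ --- is the main obstacle I anticipate, and it is exactly where the definition of $C_k^{*}$ does its essential work. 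A case analysis on admissible short core-arcs resolves it: for a core $2$-arc the $C_k^{*}$-constraint forces an interior vertex to be an endpoint of another stack, whose $\sigma$-inflation inserts $\ge\sigma-1\ge 2$ new interior positions and lifts the innermost length to $\ge\sigma+1\ge 4$; for a core $3$-arc the same mechanism yields innermost length $\ge\sigma+2\ge 5$; core-arcs of length $\ge 4$ inflate trivially. Assembling the two directions yields~(\ref{E:relation1}).
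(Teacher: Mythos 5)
Your proposal is correct and follows essentially the same route as the paper: inclusion--exclusion over $1$-, $\beta_2$- and $\beta_3$-arcs for (\ref{E:relation3}), the stack-collapse map onto $C_k^*$ with M\"obius inversion for (\ref{E:relation2}), and the $\sigma$-inflation with composition counting for (\ref{E:relation1}), differing only in the order of presentation and reindexing of the binomial coefficients. You also correctly isolate and resolve the one delicate point --- that the absence of $\beta$-arcs in $C_k^*$ is exactly what forces every $\sigma$-inflation ($\sigma\ge 3$) of a short core-arc to have length $\ge 4$ --- which is the same contrapositive argument the paper gives in its Claim $1$.
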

In Tab.\ref{T:000} we display the first numbers of
$\langle k,4,3\rangle$-structures and
$\langle k,4,4\rangle$-structures, respectively.
\begin{table}
\begin{center}
\begin{tabular}{|c|c|c|c|c|c|c|c|c|c|c|c|c|c|c|c|c|c|}
\hline $n$ & \small$8$ & \small $9$ & \small $10$ & \small $11$
&\small $12$ & \small $13$ & \small $14$ & \small $15 $ &
\small$16$ & \small$17$ & \small$18$ & \small$19$ & \small$20$ &
\small$21$
& \small$22$ & \small $23$ & \small $24$ \\
\hline \small ${\sf T}_{3,3}^{[4]}(n)$ & \small $1$ &\small $2$ &
\small$4$ & \small $8$ & \small $15$ & \small$28$ & \small$52$&
\small $96$ & \small$176$ & \small$316$ & \small$557$ &
\small$965$ &\small $1660$ & \small$2860$ & \small$4974$
& \small $8754$ & \small $15562$ \\
\hline \small ${\sf T}_{3,4}^{[4]}(n)$ & \small $1$ &\small $1$ &
\small$1$ & \small $2$ & \small $4$ & \small$8$ & \small$14$&
\small $23$ & \small$36$ & \small$56$ & \small$88$ & \small$141$
&\small $231$ & \small$382$ & \small$633$
& \small $1038$ & \small $1679$ \\
\hline
\end{tabular}
\centerline{} \caption{\small Exact enumeration: ${\sf
T}_{3,3}^{[4]}(n)$ and ${\sf T}_{3,4}^{[4]}(n)$ for $n\le 24$,
respectively.}\label{T:000}
\end{center}
\end{table}
\begin{proof}
We first show that there exists a mapping from $\langle
k,4,\sigma\rangle$-structures with $h$ arcs over $[n]$ into
$\dot\bigcup_{\sigma-1\le b\le h-1}C^*_k (n-2b,h-b)$:
\begin{equation}
c\colon T^{[4]}_{k,\sigma}(n,h)\rightarrow
\dot\bigcup_{\sigma-1\le b\le h-1}{C}^*_k(n-2b,h-b), \quad
\delta\mapsto c(\delta)
\end{equation}
which is obtained in two steps: first induce $c(\delta)$ by
mapping arcs and isolated vertices as follows:
\begin{equation}
\forall \,\ell\ge \sigma-1;\quad
((i-\ell,j+\ell),\dots,(i,j)) \mapsto (i,j) \ \quad \text{\rm and} \quad
j \mapsto j \quad \text{\rm if $j$ is an isolated vertex}
\end{equation}
and second relabel the resulting diagram from left to right in increasing
order, see Fig.\ref{arc4stack3-2}.\\
\begin{figure}[ht]
\centerline{\epsfig{file=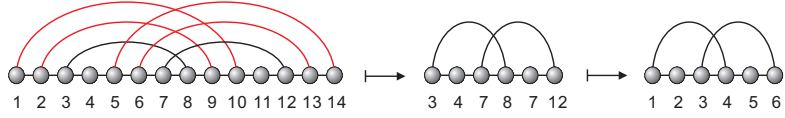,width=0.7\textwidth}\hskip8pt}
\caption{\small The mapping $c\colon
T_{k,\sigma}^{[4]}(n,h)\longrightarrow \dot\bigcup_{\sigma-1\le
b\le h-1}{C}_k^{*}(n-2b,h-b)$ is obtained in two steps: first
contraction of the stacks while keeping isolated points and
secondly relabeling of the resulting diagram.  }
\label{arc4stack3-2}
\end{figure}
{\it Claim $1$.} $c\colon T^{[4]}_{k,\sigma}(n,h)\longrightarrow
\dot\bigcup_{\sigma-1\le b\le h-1}{C}^*_k(n-2b,h-b)$ is
well-defined and surjective.\\
By construction, $c$ does not change the crossing number. Since
$T^{[4]}_{k,\sigma}(n)$ contains only arcs of length $\ge 4$ we
derive $c(T_{k,\sigma}^{[4]}(n))\subset C^*_k(n-2b,h-b)$.
Therefore $c$ is well-defined. It remains to show that $c$ is
surjective. For this purpose let $\delta\in C^*_k(n-2b,h-b)$ and
set $a=b-(\sigma-1)(h-b)$. We proceed constructing a
$k$-noncrossing structure $\tilde{\delta}$ in
three steps: \\
{\it Step $1$.} replace each label $i$ by $r_i$, where $r_i\le r_s$ if
and only if $i\le s$.\\
{\it Step $2$.} replace the leftmost arc $(r_p,r_q)$ by the sequence of arcs
\begin{equation}
\left((\tau_p-([\sigma-1]+a),\tau_q+([\sigma-1]+a)),\dots,(\tau_p,\tau_q)
\right)
\end{equation}
replace any other arc $(r_p,r_q)$ by the sequence
\begin{equation}
\left((\tau_p-[\sigma-1],\tau_q+[\sigma-1]),\dots,(\tau_p,\tau_q)\right)
\end{equation}
and each isolated vertex $r_s$ by $\tau_s$.\\
{\it Step $3$.} Set for $x,y\in\mathbb{Z}$, $\tau_b+y \le \tau_c+ x$ if and
only if ($b<c$) or ($b=c$ and $y\le x$). By construction, $\le$ is a linear
order over
$$
n-2b+2(h-b)\,(\sigma-1)+2a=
n-2b+2(h-b)\,(\sigma-1)+2(b-(\sigma-1)(h-b))=n
$$
elements, which we then label from $1$ to $n$ (left to right) in
increasing order. It is straightforward to verify that
$c(\tilde{\delta})=\delta$ holds. It remains to show that
$\tilde{\delta}\in T^{[4]}_{k,\sigma}(n)$. Suppose {\it a
contrario} $\tilde{\delta}$ contains an arc $(i,i+2)$. Since
$\sigma\ge 3$ we can then conclude that $i+1$ is necessarily
isolated. The arc $(i,i+2)$ is mapped by $c$ into $(j,j+2)$ with
isolated point $j+1$, which is impossible by definition of
$C^*_k(n',h')$. It follows similarly that an arc of the form $(i,i+3)$
cannot be contained in $\tilde{\delta}$ and Claim $1$ follows.\\
Labeling the $h$ arcs of $\delta\in T^{[4]}_{k,\sigma}(n,h)$ from
left to right and keeping track of multiplicities gives rise to
the map
\begin{equation}\label{E:core}
f_{k,\sigma}^{} \colon T^{[4]}_{k,\sigma}(n,h) \rightarrow
\dot\bigcup_{\sigma-1\le b\le h-1} \left[C_k^{*}(n-2b,h-b) \times
\left\{(a_{j})_{1\le j\le h-b}\mid\sum_{j=1}^{h-b}a_{j}=b, \
a_{j}\ge \sigma-1 \right\}\right],
\end{equation}
given by $f_{k,\sigma}^{ }(\delta)=(c(\delta),(a_{j})_{1\le j\le
h-b})$. We can conclude that $f_{k,\sigma}^{ }$ is well-defined and
a bijection. We proceed computing the multiplicities of the
resulting core-structures \cite{Reidys:07lego}:
\begin{equation}\label{E:core-1}
\vert\{(a_j)_{1\le j\le b}\mid \sum_{j=1}^{h-b}a_j=b; \ a_j\ge
\sigma -1\}\vert = {b+(2-\sigma)(h-b)-1 \choose h-b-1} \ .
\end{equation}
Eq.~(\ref{E:core-1}) and eq.~(\ref{E:core}) imply
\begin{equation}
{\sf T}^{[4]}_{k,\sigma}(n,h)=\sum_{b=\sigma-1}^{h-1}
{b+(2-\sigma)(h-b)-1 \choose h-b-1}{\sf C}^{*}_{k}(n-2b,h-b) \ ,
\end{equation}
whence eq.~(\ref{E:relation1}).
Next we consider the map
\begin{equation}
c^{*}\colon T^{*}_k(n,h)\rightarrow \dot\bigcup_{0\le b\le
h-1}{C}^*_k(n-2b,h-b), \quad \delta\mapsto c^*(\delta)
\end{equation}
Indeed, $c^*$ is well defined, since any diagram in $T^{*}_k(n,h)$
can be mapped into a core structure without $1$- and $\beta$-arcs,
i.e.~into an element of $C^{*}_{k}(n',h')$. That gives rise to
\begin{eqnarray}\label{relation_2}
{\sf T}_k^{*}(n,h)=\sum_{b=0}^{h-1}{h-1 \choose b}{\sf
C}_{k}^{*}(n-2b,h-b)
\end{eqnarray}
and via M\"obius-inversion formula we obtain
eq.~(\ref{E:relation2}). It is straightforward to show there are
$\lambda(n,j_1,j_2,j_3)={n-j_1-2j_2-3j_3\choose
j_1,j_2,j_3,n-2j_1-3j_2-4j_3}$ ways to select $j_1$ 1-arcs ,$j_2$
$\beta_2$-arcs and $j_3$ $\beta_3$-arcs over $[n]$. Since removing
$j_1$ 1-arcs, $j_2$ $\beta_2$-arcs and $j_3$ $\beta_3$-arcs
removes $2j_1+3j_2+4j_3$ vertices, the number of configurations of
at least $j_1$ 1-arcs, $j_2$ $\beta_2$-arcs and $j_3$
$\beta_3$-arcs is given by
$\lambda(n,j_1,j_2,j_3)f_{k}(n-2j_1-3j_2-4j_3,n-2h-j_2-2j_3)$. Via
inclusion-exclusion principle, we arrive at
\begin{eqnarray*}
{\sf T}_k^{*}(n,h)=\sum_{0\le j_1+j_2+j_3\le
h}(-1)^{j_1+j_2+j_3}\lambda(n,j_1,j_2,j_3)f_{k}(n-2j_1-3j_2-4j_3,n-2h-j_2-2j_3)
\ ,
\end{eqnarray*}
whence Theorem~\ref{T:core-3}.
\end{proof}

The following functional identity,
relating the bivariate generating functions of ${\sf T}^{[4]}_{k,\sigma}(n,h)$
and ${\sf C}^{*}_{k}(n,h)$, is instrumental for proving our main result in the
next section, Theorem~\ref{T:arc-3}.

\begin{lemma}\cite{Reidys:07lego}{\bf }\label{L:www-3}
Let $k,\sigma\in \mathbb{N}$,
$k\ge 2$ and let $u,x$ be indeterminants.
Suppose we have
\begin{equation}\label{E:uni1}
\forall\, h\ge 1,\ \ {\sf
A}_{k,\sigma}^{}(n,h)=\sum_{b=\sigma-1}^{h-1} {b+(2-\sigma)(h-b)-1
\choose h-b-1} {\sf B}_k(n-2b,h-b)\ \text{\it and} \  {\sf
A}_{k,\sigma}^{}(n,0)=1 \ .
\end{equation}
Then we have the functional relation
\begin{eqnarray}\label{E:universal}
\sum_{n \ge 0}\sum_{0\le h\le \frac{n}{2}}{\sf A}^{ }_{k,
\sigma}(n,h)u^hx^n & = & \sum_{n \ge 0}\sum_{0\le h \le
\frac{n}{2}}{\sf B}^{ }_{k} (n,h)\left(\frac{u\cdot (ux^2)^{
\sigma-1}}{1-ux^2}\right)^hx^n\ .
\end{eqnarray}
\end{lemma}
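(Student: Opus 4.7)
The plan is to manipulate the bivariate generating function on the left-hand side, using the defining recurrence, until it becomes the right-hand side. I would first isolate the $h=0$ term, getting
\[
\sum_{n\ge 0}\sum_{h\ge 0}{\sf A}_{k,\sigma}(n,h)u^h x^n \;=\;\frac{1}{1-x}\;+\;\sum_{h\ge 1} u^h \sum_{n\ge 0}\sum_{b=\sigma-1}^{h-1}\binom{b+(2-\sigma)(h-b)-1}{h-b-1}{\sf B}_k(n-2b,h-b)\,x^n,
\]
and then swap the order of summation. Writing $r=h-b$ and using $\sum_{n\ge 0}{\sf B}_k(n-2b,r)x^n = x^{2b}B_r(x)$ with $B_r(x):=\sum_{m\ge 0}{\sf B}_k(m,r)x^m$, the expression becomes
\[
\frac{1}{1-x}\;+\;\sum_{r\ge 1} u^{r} B_r(x) \sum_{b\ge \sigma-1}\binom{b+(2-\sigma)r-1}{r-1}(ux^2)^{b}.
\]

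The heart of the computation is evaluating the inner sum $S_r(y):=\sum_{b\ge \sigma-1}\binom{b+(2-\sigma)r-1}{r-1}y^{b}$ with $y=ux^2$. I would argue that $\binom{b+(2-\sigma)r-1}{r-1}$, interpreted combinatorially as the number of weak compositions of $b$ into $r$ parts each $\ge \sigma-1$ (the same count that appears in \eqref{E:core-1}), vanishes for $b<(\sigma-1)r$. Substituting $b=(\sigma-1)r+c$ with $c\ge 0$ collapses the upper argument to $r+c-1$, so
\[
S_r(y)\;=\;y^{(\sigma-1)r}\sum_{c\ge 0}\binom{r+c-1}{c}y^{c}\;=\;\frac{y^{(\sigma-1)r}}{(1-y)^{r}}.
\]

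Plugging this back yields
\[
\sum_{n,h}{\sf A}_{k,\sigma}(n,h)u^h x^n \;=\; \frac{1}{1-x}\;+\;\sum_{r\ge 1}B_r(x)\left(\frac{u\,(ux^2)^{\sigma-1}}{1-ux^2}\right)^{r},
\]
which matches the right-hand side once we observe that the $r=0$ term there is $B_0(x)=\sum_n{\sf B}_k(n,0)x^n=\frac{1}{1-x}$, consistent with ${\sf A}_{k,\sigma}(n,0)=1$.

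The only delicate step is the binomial manipulation: one must recognize that even though the sum is written as starting at $b=\sigma-1$, the coefficient is effectively supported on $b\ge (\sigma-1)r$ by its combinatorial definition, so that the substitution $b=(\sigma-1)r+c$ produces the standard negative-binomial series $\sum_c\binom{c+r-1}{c}y^c=(1-y)^{-r}$. The rest is bookkeeping of the form $u^r(ux^2)^{(\sigma-1)r}(1-ux^2)^{-r}=\bigl(u(ux^2)^{\sigma-1}/(1-ux^2)\bigr)^r$.
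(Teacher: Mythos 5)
Your proof is correct, and it is the natural direct argument: split off $h=0$, reindex by $r=h-b$, factor out $x^{2b}$, and sum the negative-binomial series. Note that the paper itself gives no proof of this lemma --- it is imported by citation from \cite{Reidys:07lego} --- so there is nothing in this document to compare against; your ordinary-generating-function computation is the standard route, and it is in a different style from the exponential-generating-function/Laplace-integral manipulation the authors use for the analogous Claim~1 in the proof of Theorem~\ref{T:arc-3}. Two points deserve the emphasis you gave them, plus one you slightly underplay. First, you are right that the step $S_r(y)=y^{(\sigma-1)r}(1-y)^{-r}$ hinges on reading $\binom{b+(2-\sigma)r-1}{r-1}$ as the composition count of eq.~(\ref{E:core-1}), hence as vanishing for $\sigma-1\le b<(\sigma-1)r$; with the generalized binomial convention for negative upper argument the displayed sum starting at $b=\sigma-1$ would pick up spurious nonzero terms, so this convention is genuinely load-bearing. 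Second, your closing observation that the $r=0$ term of the right-hand side must equal $\frac{1}{1-x}$ is not merely ``consistent with'' ${\sf A}_{k,\sigma}(n,0)=1$: it requires ${\sf B}_k(n,0)=1$ for all $n$, which is an additional hypothesis the lemma statement omits but which does hold in every application here (Theorem~\ref{T:core-3} asserts ${\sf C}_k^*(n,0)=1$, and likewise for ${\sf T}_k^*$). You should state that assumption explicitly rather than infer it; with it added, your argument is complete.
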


According to Lemma \ref{L:www-3} eq.~{(\ref{relation_2})} and
eq.~(\ref{E:relation1}) we obtain the two functional identities
\begin{eqnarray}
\label{E:c} &&\sum_{n \ge 0}\sum_{0\le h\le \frac{n}{2}}{\sf
T}^{*}_{k}(n,h)u^hx^n  =  \sum_{n \ge 0}\sum_{0\le h \le
\frac{n}{2}}{\sf C}^{
*}_{k}(n,h)\left(\frac{u}{1-ux^2}\right)^hx^n\\
\label{E:d} &&\sum_{n \ge 0} {\sf T}^{[4] }_{k,\sigma}(n)x^n  =
\sum_{n \ge 0}\sum_{0\le h \le \frac{n}{2}}{\sf
C}^{*}_{k}(n,h)\left(\frac{ (x^2)^{\sigma-1}}{1-x^2}\right)^hx^n
 \mbox{\quad for } \sigma\ge 3\ .
\end{eqnarray}


\section{Asymptotic Enumeration}\label{S:arc-3}


In this section we study the asymptotics of
$\langle k,4,\sigma\rangle$-structures,
where $\sigma\ge 3$. We are particularly
interested in deriving simple formulas that can be used for
assessing the complexity of prediction algorithms for
$k$-noncrossing RNA structures. In order to state
Theorem~\ref{T:arc-3} below we introduce
\begin{eqnarray}
\label{E:w} w_0(x)       & = &
\frac{x^{2\sigma-2}}{1-x^2+x^{2\sigma}}\\
v(x)         & = & 1-x+w(x)x^2+w(x)x^3+w(x)x^4 \\
\label{E:v} v_0(x) & = & 1-x+w_0(x)x^2+w_0(x)x^3+w_0(x)x^4  .
\end{eqnarray}

\begin{theorem}\label{T:arc-3}
Let $k,\sigma\in\mathbb{N}$, $k,\sigma\ge 3$, $x$ be an
indeterminate and $\rho_k$ the dominant, positive real singularity
of $\sum_{n\ge 0}f_k(2n,0)z^{2n}$. Then ${\bf T}^{[4]}_{
k,\sigma}(x)$, the generating function of
$\langle k,4,\sigma\rangle$-structures is given by
\begin{equation}\label{E:functional-arc-3}
{\bf T}_{k,\sigma}^{[4]}(x) =
\frac{1}{v_0(x)} \sum_{n \ge
0}f_k(2n,0)\left(\frac{\sqrt{w_{0}(x)}\,x}{v_0(x)}\right)^{2n} \ .
\end{equation}
Furthermore
\begin{equation}\label{E:growth-3}
{\sf T}_{k,\sigma}^{[4]}(n) \sim c_k\,
n^{-(k-1)^2-\frac{k-1}{2}}\,
\left(\frac{1}{\gamma_{k,\sigma}^{[4]}}\right)^n\ ,\quad \text{\it
for}\quad k=3,4,\dots,9
\end{equation}
holds, where $\gamma_{k,\sigma}^{[4]}$ is the positive real
dominant singularity of ${\bf T}_{k,\sigma}^{[4]}(x)$ and the
minimal positive real solution of the equation
$\frac{\sqrt{w_{0}(x)}\,x}{v_0(x)} = \rho_k$ and $f_k(2n,0)\sim
n^{-(k-1)^2-\frac{k-1}{2}}\left(\frac{1} {\rho_k}\right)^{2n}$
(eq.~(\ref{E:f-k-imp})).
\end{theorem}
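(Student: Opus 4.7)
The plan is to derive the functional equation~(\ref{E:functional-arc-3}) by reducing the identities of Theorem~\ref{T:core-3} to a closed-form generating function, and then to obtain the asymptotic~(\ref{E:growth-3}) as a direct application of Theorem~\ref{T:realdeal}.

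First, observe that $u = w_0(x) = \frac{x^{2\sigma-2}}{1-x^2+x^{2\sigma}}$ is the unique solution in $u$ of the linear equation $\frac{u}{1-ux^2} = \frac{x^{2\sigma-2}}{1-x^2}$. Substituting this into eq.~(\ref{E:c}) makes its right-hand side coincide with that of eq.~(\ref{E:d}), so equating left-hand sides gives
\begin{equation*}
{\bf T}^{[4]}_{k,\sigma}(x) = \sum_{n,h}{\sf T}^{*}_k(n,h)\, w_0(x)^h\, x^n.
\end{equation*}
Hence it suffices to compute $G(x,u) := \sum_{n,h}{\sf T}^{*}_k(n,h)\, u^h x^n$ in closed form and specialize $u \mapsto w_0(x)$.

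To evaluate $G(x,u)$, I would insert the inclusion-exclusion formula~(\ref{E:relation3}) into the double sum and reorganize. Writing $m = n - 2j_1 - 3j_2 - 4j_3$ and $h' = h - j_1 - j_2 - j_3$ decouples the pure matching factor $f_k(m, m-2h')$ from a purely combinatorial triple sum; the multinomial theorem collapses the $(j_1,j_2,j_3)$-layer and the resulting identity $\sum_{J \ge 0}\binom{m+J}{m}s^J = (1-s)^{-m-1}$ evaluates the triple sum at $s = -u(x^2+x^3+x^4)$ as $(1 + u(x^2+x^3+x^4))^{-(m+1)}$. Then using $f_k(m, m-2h') = \binom{m}{m-2h'}f_k(2h',0)$ together with $\sum_{\ell \ge 0}\binom{2h'+\ell}{\ell}y^\ell = (1-y)^{-2h'-1}$ at $y = x/(1 + u(x^2+x^3+x^4))$, the remaining sum simplifies to
\begin{equation*}
G(x,u) = \frac{1}{v(x)}\sum_{n\ge 0} f_k(2n, 0)\left(\frac{\sqrt{u}\, x}{v(x)}\right)^{2n},\qquad v(x) = 1 - x + u(x^2+x^3+x^4).
\end{equation*}
Specializing $u = w_0(x)$ turns $v(x)$ into $v_0(x)$ and yields~(\ref{E:functional-arc-3}).

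For the asymptotic statement I would apply Theorem~\ref{T:realdeal} with $\vartheta_\sigma(x) = \sqrt{w_0(x)}\, x/v_0(x)$. This function is algebraic of degree two over $K(x)$, regular in a neighborhood of the origin, and satisfies $\vartheta_\sigma(0)=0$ since $w_0(0)=0$ and $v_0(0)=1$. The theorem then furnishes the asymptotic of $[x^n]{\bf F}_k(\vartheta_\sigma(x))$ with the stated polynomial and exponential factors, and since ${\bf T}^{[4]}_{k,\sigma}(x) = v_0(x)^{-1}{\bf F}_k(\vartheta_\sigma(x))$ with $v_0(x)^{-1}$ analytic at the dominant singularity, Theorem~\ref{T:transfer1} transfers the asymptotic through the prefactor while only modifying the leading constant. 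The main technical obstacle is the verification, for each $k \in \{3,\ldots,9\}$ and every admissible $\sigma$, that the positive real root $\gamma_{k,\sigma}^{[4]}$ of $\vartheta_\sigma(x) = \rho_k$ is the minimal-modulus solution of $\vartheta_\sigma(x) = \pm\rho_k$, strictly smaller in modulus than the roots of $v_0(x)$, of $1 - x^2 + x^{2\sigma}$, and than the competing singularities of ${\bf F}_k$ collected in Table~\ref{T:111}; this finite case analysis is where the restriction to $k \le 9$ originates.
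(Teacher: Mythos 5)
Your proposal is correct and follows essentially the same route as the paper: it combines eqs.~(\ref{E:c}) and (\ref{E:d}) via the substitution $u=w_0(x)$, evaluates $\sum_{n,h}{\sf T}^*_k(n,h)u^hx^n$ by inserting eq.~(\ref{E:relation3}) and collapsing the $(j_1,j_2,j_3)$-sum exactly as in the paper's Claim~1, and then invokes Theorem~\ref{T:realdeal} together with the same finite check on competing singularities for $3\le k\le 9$. The only cosmetic deviations are the order in which the specialization $u\mapsto w_0(x)$ is performed and your use of the binomial series $\sum_{\ell}\binom{2h'+\ell}{\ell}y^{\ell}=(1-y)^{-2h'-1}$ in place of the paper's integral representation of $\sum_m\varphi_m(w)y^m$, which yield the same identity.
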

\begin{proof}
In the following we will use the notation $w_{0}$ instead of
$w_0(x)$, eq.~(\ref{E:w}). The first step derives a functional
equation relating the bivariate generating functions of
$T^*_k(n,h)$ and $f_k(2h',0)$. For this purpose we use
eq.~(\ref{E:relation3}).\\
{\it Claim $1$.}
\begin{equation}\label{E:dagger}
\begin{split}
\sum_{n\ge 0}\sum_{h \le \frac{n}{2}}{\sf T}_k^{*}(n,h)w^hx^n
=\frac{1}{v(x)}\sum_{n \ge
0}f_k(2n,0)\left(\frac{\sqrt{w}x}{v(x)}\right)^{2n} \ .
\end{split}
\end{equation}
Set $\varphi_{m}(w)=\sum_{h\le \frac{m}{2}}{m \choose
2h}f_k(2h,0)w^h$. In order to prove Claim $1$ we compute
\begin{eqnarray*}
 && \sum_{n\ge 0}\sum_{h \le \frac{n}{2}}{\sf T}_k^{*}(n,h)w^hx^n\\
&=& \sum_{n \ge 0}\sum_{h \le \frac{n}{2}}\sum_{0\le
j_1+j_2+j_3\le
h}(-1)^{j_1+j_2+j_3}
\lambda(n,j_1,j_2,j_3)f_k(n-2j_1-3j_2-4j_3,n-2h-j_2-2j_3)w^hx^n \\
&=&\sum_{n \ge 0}\sum_{j_1+j_2+j_3\le\frac{n}{2}}
(-1)^{j_1+j_2+j_3}\lambda(n,j_1,j_2,j_3)x^n\\
&&\hspace{3cm}\times\sum_{h\ge j_1+j_2+j_3}f_k(n-2j_1-3j_2-4j_3,n-2h-j_2-2j_3)w^h\\
&=&\sum_{n \ge 0}\sum_{j_1+j_2+j_3 \le
\frac{n}{2}}(-1)^{j_1+j_2+j_3}\lambda(n,j_1,j_2,j_3)x^n\\
&&\hspace{3cm}\times\sum_{h\ge j_1+j_2+j_3}{n-2j_1-3j_2-4j_3
\choose
n-2h-j_2-2j_3}f_k(2(h-j_1-j_2-j_3),0)w^h\\
&=&\sum_{n \ge 0}\sum_{j_1+j_2+j_3 \le
\frac{n}{2}}(-1)^{j_1+j_2+j_3}\lambda(n,j_1,j_2,j_3)
w^{j_1+j_2+j_3}\varphi_{n-2j_1-3j_2-4j_3}(w)x^n \ .
\end{eqnarray*}
We interchange the summation over $j_1+j_2+j_3$ and $n$ and arrive
at
\begin{align*}
& \quad\sum_{j_1+j_2+j_3\ge 0}\sum_{n\ge
2j_1+3j_2+4j_3}(-1)^{j_1+j_2+j_3}{n-j_1-2j_2-3j_3
\choose j_1,j_2,j_3,n-2j_1-3j_2-4j_3}w^{j_1+j_2+j_3}
\varphi_{n-2j_1-3j_2-4j_3}(w)x^n\\
&=\sum_{j_1+j_2+j_3\ge
0}\frac{(-w)^{j_1+j_2+j_3}}{j_1!j_2!j_3!}\sum_{n \ge
2j_1+3j_2+4j_3}\frac{(n-j_1-2j_2-3j_3)!}{(n-2j_1-3j_2-4j_3)!}
\varphi_{n-2j_1-3j_2-4j_3}(w)x^n
\ .
\end{align*}
Setting $m=n-2j_1-3j_2-4j_3$ this becomes
\begin{eqnarray*}
&=&\sum_{j_1+j_2+j_3 \ge
0}\frac{(-w)^{j_1+j_2+j_3}}{j_1!j_2!j_3!}x^{2j_1+3j_2+4j_3}\sum_{m
\ge
0}\frac{(m+j_1+j_2+j_3)!}{m!}\varphi_m(w)x^m\\
&=&\sum_{m \ge 0}\left[\sum_{j_1+j_2+j_3\ge 0}{m+j_1+j_2+j_3
\choose
m,j_1,j_2,j_3}(-wx^2)^{j_1}(-wx^3)^{j_2}(-wx^4)^{j_3}\right]
\varphi_{m}(w)x^m\\
&=&\sum_{m \ge 0}\varphi_m(w)x^m\left(\frac{1}{1+wx^2+wx^3+wx^4}
\right)^{m+1}\\
&=&\frac{1}{1+wx^2+wx^3+wx^4}\sum_{m \ge
0}\varphi_m(w)\left(\frac{x}{1+wx^2+wx^3+wx^4}\right)^m \ .
\end{eqnarray*}
Next we compute
\begin{eqnarray*}
\sum_{m \ge 0}\varphi_m(w)y^m & = & \int_{0}^{\infty}\sum_{m
\ge 0}\varphi_m(w)\frac{(xy)^m}{m!}e^{-x}dx\\
&=&\int_{0}^{\infty}\sum_{m\ge 0}\sum_{h\le \frac{m}{2}}{m \choose
2h}f_k(2h,0)w^h\frac{(xy)^m}{m!}e^{-x}dx\\
&=&\int_{0}^{\infty}\sum_{m\ge 0}\sum_{h\le
\frac{m}{2}}f_k(2h,0)w^h\frac{(xy)^{2h}}{(2h)!}
\frac{(xy)^{m-2h}}{(m-2h)!}e^{-x}dx\\
&=&\int_{0}^{\infty}\sum_{h\ge
0}f_k(2h,0)\frac{(\sqrt{w}\,xy)^{2h}}{(2h)!}\sum_{m\ge
2h}\frac{(xy)^{m-2h}}{(m-2h)!}e^{-x}dx\\
&=&\sum_{n \ge
0}f_k(2n,0)\frac{(\sqrt{w}y)^{2n}}{(2n)!}\int_{0}^{\infty}e^{-(1-y)x}
x^{2n}dx\\
&=&\sum_{n \ge
0}f_k(2n,0)\frac{(\sqrt{w}y)^{2n}}{(2n)!}
\frac{\int_{0}^{\infty}e^{-(1-y)x}((1-y)x)^{2n}d((1-y)x)}{(1-y)^{2n+1}}\\
&=&\frac{1}{1-y}\sum_{n \ge
0}f_k(2n,0)\left(\frac{\sqrt{w}y}{1-y}\right)^{2n} \ .
\end{eqnarray*}
Therefore the bivariate generating function can be written as
\begin{eqnarray*}
\sum_{n\ge 0}\sum_{h \le \frac{n}{2}}{\sf T}_k^{*}(n,h)w^hx^n
=\frac{1}{v(x)}\sum_{n \ge
0}f_k(2n,0)\left(\frac{\sqrt{w}\,x}{v(x)}\right)^{2n} \ ,
\end{eqnarray*}
whence Claim $1$. In view of eq.~(\ref{E:c}) and Claim$1$ we
arrive at
\begin{equation}\label{E:h5}
\quad\sum_{n \ge 0}\sum_{0\le h \le \frac{n}{2}}{\sf
C}_{k}^{*}(n,h)\left( \frac{w}{1-wx^2}\right)^hx^n
=\frac{1}{v(x)}\sum_{n \ge
0}f_k(2n,0)\left(\frac{\sqrt{w}\,x}{v(x)}\right)^{2n} \ .
\end{equation}
By definition of $w_0=w_0(x)$ have
\begin{equation}\label{E:w0-x}
\frac{(x^2)^{\sigma-1}}{1-x^2}=\frac{w_0}{1-w_0x^2}.
\end{equation}
According to eq.(\ref{E:d}), eq.(\ref{E:w0-x}) and eq.(\ref{E:h5})
this allows us to derive
\begin{eqnarray*}
{\bf T}^{[4] }_{k,\sigma}(x) & = & \sum_{n \ge
0}\sum_{0\le h \le \frac{n}{2}}{\sf C}^{*}_{k}(n,h)\left(\frac{
(x^2)^{\sigma-1}}{1-x^2}\right)^hx^n \\
&=&\sum_{n\ge 0}\sum_{0\le h\le
\frac{n}{2}}{\sf C}^{*}_{k}(n,h)\left(\frac{w_0}{1-w_0x^2}\right)^hx^n\\
&=&\frac{1}{v_0(x)}\sum_{n \ge
0}f_k(2n,0)\left(\frac{\sqrt{w_0}x}{v_0(x)}\right)^{2n}\ ,
\end{eqnarray*}
whence (\ref{E:functional-arc-3}).
Let ${\bf V}_{k}(x)=\sum_{n \ge 0}f_k(2n,0)\left(\frac{
\sqrt{w_0}\,x}{v_0(x)}\right)^{2n}$.\\
{\it Claim $2$.} The unique, minimal, positive, real solution of
\begin{equation}\label{E:root}
\vartheta_{\sigma}(x)=\frac{\sqrt{w_0}x}{v_0(x)} = \rho_k\ ,\quad
\text{\it for}\quad k=3,4,\dots,9
\end{equation}
denoted by $\gamma_{k,\sigma}^{[4]}$ is the unique dominant singularity of
${\bf T}_{k,\sigma}^{[4]}(x)$. \\
Clearly, a dominant singularity of $\frac{1}{v_0(x)} {\bf V}_k(x)$
is either a singularity of ${\bf V}_k(x)$ or $\frac{1}{v_0(x)}$.
Suppose there exists some singularity $\zeta\in\mathbb{C}$ which
is a pole of $\frac{1}{v_0(x)}$. By construction $\zeta\neq 0$ and
$\zeta$ is necessarily a non-finite singularity of ${\bf V}_k(x)$.
If $\vert \zeta\vert \le \gamma_{k,\sigma}^{[4]}$, then we arrive
at the contradiction
$$
\vert {\bf V}_k(\zeta)\vert>\vert {\bf V}_k(\gamma_{k,\sigma}^{[4]})\vert
\ge  {\bf V}_k(\vert \zeta\vert)
$$
since ${\bf V}_k(\zeta)$ is
not finite and ${\bf V}_k(\gamma_{k,\sigma}^{[4]})=\sum_{n\ge
0}f_k(2n,0)\rho_k^{2n}<\infty$. Therefore all dominant
singularities of ${\bf T}_{k,\sigma}^{[4]}(x)$ are singularities
of ${\bf V}_k(x)$. According to Pringsheim's
Theorem~\cite{Titmarsh:39}, ${\bf T}_{k,\sigma}^{[4]}(x)$ has a
dominant positive real singularity which by construction equals
$\gamma_{k,\sigma}^{[4]}$ being the minimal positive real solution
of eq.~(\ref{E:root}). To prove this, we use that for $3\le k\le
9$, the generating function ${\bf F}_k(x)$ has only the two
dominant singularities $\pm \rho_k$, see Section~\ref{S:pre},
Tab.~\ref{T:111}. Furthermore we verify that for $3\le k\le 9$,
$\gamma_{k,\sigma}^{[4]}$, has strictly smaller modulus than all
solutions of $\vartheta_{\sigma}(z)=-\rho_k$, whence Claim $2$.
Accordingly, Theorem~\ref{T:realdeal} applies and we have
\begin{equation}
{\sf T}_{k,\sigma}^{[4]}(n) \sim c_k n^{-(k-1)^2-\frac{k-1}{2}}
\left(\frac{1}{\gamma_{k,\sigma}^{[4]}}\right)^{n} \quad \text{\rm
for some constant $c_k$}
\end{equation}
completing the proof of Theorem~\ref{T:arc-3}.
\end{proof}


{\bf Acknowledgments.}
We are grateful to Hillary S. W. Han, Fenix. W. D. Huang, Emma Y.
Jin and Linda Y. M. Li for their help. This work was supported by
the 973 Project, the PCSIRT Project of the Ministry of Education,
the Ministry of Science and Technology, and the National Science
Foundation of China.

\bibliographystyle{plain}



\end{document}